\def\eps{\epsilon}
\def\lam{\lambda}
\newtheorem*{theorem*}{Theorem}
\newtheorem{theorem}{Theorem}
\newtheorem{lemma}{Lemma}
\newtheorem{cor}{Corollary}
\newtheorem{prop}{Proposition}
\newtheorem*{prop*}{Proposition}
\newtheorem{conj}{Conjecture}
\newcommand*{\occ}{\frac{1}{n}\overline{\alpha}}
\DeclarePairedDelimiter{\abs}{\vert}{\vert}
\DeclareMathOperator{\E}{\mathbb{E}}
\DeclareMathOperator{\var}{Var}
\begin{document}
\title[On the average size of independent sets]{On the average size of independent sets \\ in triangle-free graphs}
\author{Ewan Davies}
\author{Matthew Jenssen}
\author{Will Perkins}

\author{Barnaby Roberts}
\address{London School of Economics}
\email{\{e.s.davies,m.o.jenssen,b.j.roberts\}@lse.ac.uk }
\address{University of Birmingham}
\email{math@willperkins.org}
\date{\today}

\begin{abstract}
We prove an asymptotically tight lower bound on the average size of independent sets in a triangle-free graph on $n$ vertices with maximum degree $d$, showing that an independent set drawn uniformly at random from such a graph has expected size at least $(1+o_d(1)) \frac{\log d}{d}n$.  This gives an alternative proof of Shearer's upper bound on the Ramsey number $R(3,k)$.  We then prove that the total number of independent sets in a triangle-free graph with maximum degree $d$ is at least  $\exp \left[\left(\frac{1}{2}+o_d(1) \right) \frac{\log^2 d}{d}n \right]$.  The constant $1/2$ in the exponent is best possible. In both cases,  tightness is exhibited by a random $d$-regular graph.

Both results come from considering the hard-core model from statistical physics:  a random independent set $I$ drawn from a graph with probability proportional to $\lambda^{|I|}$, for a fugacity parameter $\lambda>0$. We prove a general lower bound on the occupancy fraction (normalized expected size of the random independent set) of the hard-core model on triangle-free graphs of maximum degree $d$. The bound is asymptotically tight in $d$ for all $\lambda =O_d(1)$. 

We conclude by stating several conjectures on the relationship between the average and maximum size of an independent set in a triangle-free graph and give some consequences of these conjectures in Ramsey theory.  
\end{abstract}

\maketitle

\section{Introduction}

\subsection{Independent sets in triangle-free graphs}

Ajtai, Koml{\'o}s, and Szemer{\'e}di~\cite{ajtai1980note} proved that any triangle-free graph $G$ on $n$ vertices with average degree $d$ has an independent set of size at least $c \frac{\log d}{d} n$ for a small constant $c$.  Shearer~\cite{shearer1983note} later improved the constant to $1$, asymptotically as $d \to \infty$, showing that such a graph has an independent set of size at least $f(d) \cdot n$ where $f(d) = \frac{d\log d - d +1}{(d-1)^2} = (1+o_d(1)) \frac{\log d}{d} $. Here, and in what follows, logarithms will always be to base $e$. We use standard asymptotic notation, with subscripts indicating which parameters the implied functions depend on. For example we write $o_d(1)$ for a quantity that tends to zero as $d$ tends to infinity.

The off-diagonal Ramsey number $R(3,k)$ is the least integer $n$ such that any graph on $n$ vertices contains either a triangle or an independent set of size $k$.  The above result of Ajtai, Koml{\'o}s, and Szemer{\'e}di and a result of Kim~\cite{kim1995ramsey} show that $R(3,k) = \Theta(k^2/\log k)$.  Shearer's result gives the current best upper bound, showing that $R(3,k)\le (1+o(1))k^2/\log k$.  Independent work of Bohman and Keevash~\cite{bohman2013dynamic} and Fiz Pontiveros, Griffiths, and Morris~\cite{pontiveros2013triangle}  shows that $R(3,k) \ge (1/4+o(1))k^2/\log k$.  Reducing the factor $4$ gap between these bounds is a major open problem in Ramsey theory.

We prove a lower bound on the \emph{average size} of an independent set in a triangle-free graph of maximum degree $d$, matching the asymptotic form of Shearer's result, and in turn giving an alternative proof of the above upper bound on $R(3,k)$.

\begin{theorem}\label{thm:avgShort}
Let $G$ be a triangle-free graph on $n$ vertices with maximum degree $d$. Let $\mathcal I(G)$ be the set of all independent sets of $G$. 
Then 
\[ \frac{1}{|\mathcal I(G)|} \sum_{I \in \mathcal I(G)} |I| \ge (1+o_d(1))\frac{\log d}{d} n. \]
 Moreover, the constant `1' is best possible.
\end{theorem}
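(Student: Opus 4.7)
The average size of an independent set drawn uniformly from $\mathcal I(G)$ equals the expected size of a sample from the hard-core model on $G$ at fugacity $\lambda = 1$, since then every independent set has weight $\lambda^{|I|} = 1$. Writing $\alpha_G(\lambda) := \frac{1}{n}\E_\lambda |I|$ for the occupancy fraction, Theorem~\ref{thm:avgShort} reduces to showing that every triangle-free $G$ of maximum degree $d$ satisfies $\alpha_G(1) \geq (1+o_d(1))\log d/d$. This is a special case of a general lower bound on $\alpha_G(\lambda)$ promised in the abstract, and it is this general bound that I would prove.

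The main idea is a local analysis around a vertex $v$. Conditioning on the random independent set $I$ outside the closed neighbourhood $N[v]$, triangle-freeness makes $N(v)$ itself independent, so conditionally $I \cap N[v]$ is a hard-core sample on the star with centre $v$ and leaves the ``externally uncovered'' neighbours of $v$, meaning those $u \in N(v)$ with $I \cap (N(u) \setminus \{v\}) = \emptyset$. Letting $Y_v$ denote the number of externally uncovered neighbours, a direct calculation on the star $K_{1,Y_v}$ yields
\[ \Pr[v \in I \mid Y_v] = \frac{\lambda}{\lambda + (1+\lambda)^{Y_v}}, \]
together with a matching conditional expression for the probability that each specific neighbour lies in $I$. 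Averaging, these identities produce exact formulae for $p_v := \Pr[v \in I]$ and for $\sum_{u \in N(v)} \Pr[u \in I]$ in terms of the distribution of $Y_v$.

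Next I would feed these identities through convexity. The map $x \mapsto \lambda/(\lambda + (1+\lambda)^x)$ is convex on $[0,\infty)$, so Jensen's inequality gives $p_v \geq \lambda/(\lambda + (1+\lambda)^{\E Y_v})$, and a parallel treatment yields an upper bound on $\sum_{u \in N(v)} p_u$ in terms of the same $\E Y_v$. Averaging over $v$ and eliminating $\E Y_v$ via double counting and the maximum-degree bound, one obtains a single-variable inequality of the form $\alpha_G(\lambda) \geq F(\alpha_G(\lambda); d, \lambda)$ with $F$ explicit. Setting $\lambda = 1$ (or optimising over $\lambda$ if that sharpens constants) and analysing as $d \to \infty$ should give $\alpha_G(1) \geq (1+o_d(1))\log d/d$. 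Sharpness of the leading constant $1$ then follows by specialising to the random $d$-regular graph, which is essentially triangle-free and whose average independent-set size is $(1+o_d(1))(\log d/d)n$, in line with Shearer's tightness example.

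The main technical obstacle is the variational step. To recover the sharp constant $1$ one must balance $\E Y_v$ against $\sum_{u \in N(v)} p_u$ carefully, using the hard-core Gibbs identity $\Pr[I \cap N(u) = \emptyset] = (1+\lambda) p_u / \lambda$ together with triangle-freeness, and then solve the resulting self-consistent inequality asymptotically without losing factors in the leading order. Everything else---the star reduction, Jensen, and the hard-core identities---is local and essentially mechanical; it is this global optimisation at the right value of $\lambda$ that does the real work.
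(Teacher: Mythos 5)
Your overall strategy is the right one and matches the paper's: interpret the uniform average as the hard-core occupancy fraction, do a local analysis around a vertex $v$ exploiting triangle-freeness, apply Jensen/convexity, and conclude from the resulting single-variable constrained optimisation. The paper makes the local step slightly cleaner by conditioning on the event that $v$ is \emph{uncovered} (i.e.\ $N(v)\cap I=\emptyset$) rather than on the set $I$ outside $N[v]$: this yields two crisp facts, $\Pr[v\in I\mid v\text{ uncovered}]=\lambda/(1+\lambda)$ and $\Pr[v\text{ uncovered}\mid Z_v=j]=(1+\lambda)^{-j}$ where $Z_v$ counts uncovered neighbours, from which both the exact identity $\occ(\lambda)=\tfrac{\lambda}{1+\lambda}\E[(1+\lambda)^{-Z}]$ and the degree-driven inequality $\occ(\lambda)\geq\tfrac{\lambda}{1+\lambda}\tfrac{\E Z}{d}$ follow immediately. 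Your ``externally uncovered'' variable $Y_v$ and the star $K_{1,Y_v}$ give a correct but slightly heavier version of the same computation; in particular your ``parallel treatment'' for $\sum_{u\sim v}p_u$ requires handling the concavity of $y\mapsto y\,\lambda(1+\lambda)^{y-1}/(\lambda+(1+\lambda)^y)$, which the paper sidesteps entirely by invoking its Fact~1 at the neighbour $u$.

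The substantive gap is in your final sentence ``Setting $\lambda=1$ (or optimising over $\lambda$ if that sharpens constants)''. Setting $\lambda=1$ in the general occupancy bound (Theorem~\ref{thm:Wtrianglefree}) gives only $\tfrac{1}{2}\cdot\tfrac{W(d\log 2)}{d\log 2}\sim\tfrac{1}{2\log 2}\cdot\tfrac{\log d}{d}$, and $\tfrac{1}{2\log 2}\approx 0.72<1$, so this does not yield the sharp constant. What you actually need is to take $\lambda$ small, e.g.\ $\lambda=1/\log d$, at which point the general bound does give $(1+o_d(1))\tfrac{\log d}{d}$; but that by itself only says something about the hard-core model at that small fugacity, not about the uniform distribution you care about. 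The missing ingredient is the separate fact that $\overline\alpha_G(\lambda)$ is monotone nondecreasing in $\lambda$ (because $\overline\alpha_G'(\lambda)=\var(|I|)/\lambda\geq 0$), which is Proposition~\ref{prop:mono} in the paper. Only with monotonicity can you transfer the small-$\lambda$ lower bound up to $\lambda=1$. Your proposal neither identifies the need to use $\lambda\ll 1$ nor supplies the monotonicity step that makes doing so legitimate, and without both you would land on the constant $1/(2\log 2)$ rather than $1$. (A smaller related point: the convexity of $x\mapsto\lambda/(\lambda+(1+\lambda)^x)$ on $[0,\infty)$ that your Jensen step relies on actually fails for $\lambda>1$---there is an inflection at $x=\log\lambda/\log(1+\lambda)>0$---so any plan to ``optimise over $\lambda$'' upward is also blocked.)
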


This result is weaker than Shearer's \cite{shearer1983note} in that instead of average degree $d$ we require maximum degree $d$. 
Our result is stronger in that we show that the \emph{average size} of an independent set from such a graph is of size at least $ (1+o_d(1)) \frac{\log d}{d} n $, while Shearer shows the largest independent set is of at least this size (by analyzing a randomized greedy algorithm).

The proof of Theorem~\ref{thm:avgShort} is an adaptation of the authors' method used in~\cite{davies2015independent} to give a strengthening of results of Kahn~\cite{kahn2001entropy} and Zhao~\cite{zhao2010number} that a $d$-regular graph has at most as many independent sets as a disjoint union of $K_{d,d}$'s on the same number of vertices. 
We make this explicit in Section~\ref{sec:proof}, giving a unified method for proving these classical results of Shearer and Kahn. After sharing a draft of this paper with colleagues, we discovered that James Shearer also knew the proof of the lower bound in Theorem~\ref{thm:avgShort} and presented a sketch of it at the SIAM Conference on Discrete Mathematics in 1998, but never published it \cite{ShearerEmail}.

To see that Theorem~\ref{thm:avgShort} directly implies the upper bound $(1+o(1))k^2/\log k$ on $R(3,k)$, 
suppose that $G$ is triangle free with no independent set of size $k$. 
Then $G$ must have maximum degree less than $k$. Applying Theorem~\ref{thm:avgShort} we see the independence number is at least $(1+o_k(1))\frac{\log k}{k} n$ but less than $k$, and so $n<(1+o_k(1))\frac{k^2}{\log k}$ as required. Of course this reasoning simply uses the average size of an independent set as a lower bound for the maximum size.  In Section~\ref{sec:conjectures} we consider whether the discrepancy between the maximum and average size can be exploited to improve the upper bound on $R(3,k)$. 

After bounding the average size of an independent set from $G$, we next give lower bounds on the total number of independent sets in $G$, $|\mathcal I(G)|$.  

\begin{theorem}\label{thm:countShort}
Let $G$ be a triangle-free graph on $n$ vertices with maximum degree $d$. Then 
\[ |\mathcal I(G)| \ge e^{\left(\frac{1}{2}+ o_d(1)\right) \frac{\log^2 d}{d}  n }.\]
Moreover, the constant $1/2$ in the exponent is best possible.
\end{theorem}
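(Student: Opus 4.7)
The plan is to pass through the hard-core partition function $Z_G(\lambda) = \sum_{I \in \mathcal I(G)} \lambda^{|I|}$. Since $Z_G(0) = 1$, $Z_G(1) = |\mathcal I(G)|$, and $\frac{d}{d\lambda} \log Z_G(\lambda) = \alpha_G(\lambda)/\lambda$, where $\alpha_G(\lambda)$ denotes the expected size of a hard-core sample at fugacity $\lambda$, integrating in $\lambda$ yields
\[\log |\mathcal I(G)| \;=\; \int_0^1 \frac{\alpha_G(\lambda)}{\lambda}\, d\lambda.\]
The theorem thus reduces to a pointwise lower bound on $\alpha_G(\lambda)/n$ for every $\lambda \in (0,1]$.

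Theorem~\ref{thm:avgShort} gives this at $\lambda = 1$ only, which on its own is not enough: with that bound alone, even if it held at every $\lambda$, the integrand $\log d /(d\lambda)$ would diverge near zero. The decisive step is therefore to extend the proof of Theorem~\ref{thm:avgShort} to arbitrary fugacity by carrying $\lambda$ through the local computation at a random vertex. The natural target, motivated by the hard-core tree recursion $R(1+R)^{d-1} = \lambda$ whose solution for large $d$ satisfies $R \approx W(d\lambda)/d$, is
\[\frac{\alpha_G(\lambda)}{n} \;\geq\; (1+o_d(1)) \frac{W(d\lambda)}{d},\]
where $W$ is the Lambert function defined by $W(x)e^{W(x)} = x$. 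This matches the occupancy fraction of the hard-core model on an infinite $d$-regular tree and specializes at $\lambda = 1$ to the bound $\log d /d$ of Theorem~\ref{thm:avgShort}.

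With such a bound in hand, the substitution $u = d\lambda$ gives
\[\int_0^1 \frac{W(d\lambda)}{d\lambda}\, d\lambda \;=\; \frac{1}{d}\int_0^{d}\frac{W(u)}{u}\, du \;=\; \frac{1}{d}\Bigl[W(u) + \tfrac{1}{2}W(u)^2\Bigr]_0^{d} \;=\; (1+o_d(1))\frac{\log^2 d}{2d},\]
using the identity $\int W(u)/u\, du = W(u) + W(u)^2/2$ (verified by differentiation together with $W'(u) = W(u)/(u(1+W(u)))$) and the asymptotic $W(d) \sim \log d$. Multiplying by $n$ yields the desired lower bound on $\log |\mathcal I(G)|$.

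The main obstacle will be the generalization of Theorem~\ref{thm:avgShort} to arbitrary $\lambda$, uniformly asymptotically tight across $(0,1]$; in particular one has to control the small-fugacity range $\lambda \asymp 1/d$, where $W(d\lambda) = \Theta(1)$. Separately, to see that the constant $1/2$ cannot be improved, one compares with a random $d$-regular graph: such graphs are asymptotically triangle-free and locally tree-like, and standard results on the Bethe prediction for the hard-core model on random regular graphs show that $\log Z_G(1)/n$ converges to $(1/d)\int_0^d W(u)/u\, du \sim \log^2 d/(2d)$, matching our lower bound.
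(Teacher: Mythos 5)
Your reduction of the theorem to a pointwise lower bound on the occupancy fraction via the identity $\log|\mathcal I(G)| = \int_0^1 \overline\alpha_G(\lambda)/\lambda\,d\lambda$ is exactly the paper's strategy, and your integral computation, the antiderivative $W(u) + \tfrac12 W(u)^2$, and the tightness argument via random regular graphs all match the paper. However, you have not proved the one statement on which everything hinges: the pointwise lower bound $\overline\alpha_G(\lambda)/n \ge (1+o_d(1))W(d\lambda)/d$ for triangle-free graphs with maximum degree $d$. You explicitly flag this as ``the main obstacle,'' but that flag does not fill the gap. This bound is the central technical result of the paper (Theorem~\ref{thm:Wtrianglefree}), and its proof --- expressing $\occ_G(\lambda)$ in two ways in terms of the random variable $Z$ counting uncovered neighbours of a random vertex, then minimizing $\frac{\lam}{1+\lam}\max\{\E Z/d,\ \E(1+\lam)^{-Z}\}$ over all distributions of $Z$ via convexity --- is the bulk of the work. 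Without it your proposal is a correct reduction plus a correct integration of an unproven lemma, so the proof is not complete.

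One further point worth noting: the paper does not integrate an asymptotic bound. Theorem~\ref{thm:Wtrianglefree} gives the clean exact inequality $\occ_G(\lam)\ge \frac{\lam}{1+\lam}\cdot\frac{W(d\log(1+\lam))}{d\log(1+\lam)}$, valid for every $\lam>0$ and every $d$, which integrates exactly to $\frac{n}{2d}\bigl[W(d\log(1+\lam))^2 + 2W(d\log(1+\lam))\bigr]$; the asymptotics are taken only at the very end. Your formulation with a $(1+o_d(1))$ factor requires that error term to be uniform over the whole range $\lambda\in(0,1]$ (the dominant contribution to the integral comes from $\lam$ of order roughly between $1/d$ and $1/\log d$, where $W(d\lambda)$ varies from $\Theta(1)$ up to $\sim\log d$), which is an extra thing to verify and is most cleanly sidestepped by proving a concrete, non-asymptotic occupancy bound as the paper does.
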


In comparison, improving on previous results of Cooper and Mubayi~\cite{cooper2014counting}, Cooper, Dutta, and Mubayi \cite{cooper2014hypergraphs} proved that any triangle-free graph of average degree $d$ has at least $ e^{\left(\frac{1}{4}+ o_d(1) \right) \frac{\log^2 d}{d}  n }$ independent sets. 

As a simple corollary we get the following lower bound without degree restrictions.

\begin{cor}
\label{cor:count}
Let $G$ be a triangle-free graph on $n$ vertices. Then  
\[ |\mathcal I(G)| \geq e^{\left(\frac{\sqrt{2\log 2}}{4}+o(1) \right)\sqrt n \log n}. \]
\end{cor}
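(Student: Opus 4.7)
The plan is to do a case split on the maximum degree $d$ of $G$, balancing two natural lower bounds on $|\mathcal I(G)|$: Theorem~\ref{thm:countShort}, which (in the range of interest where $\log^2 x/x$ is decreasing) gives a bound that weakens as $d$ grows, versus the trivial bound $|\mathcal I(G)| \ge 2^d$ obtained by noting that the neighborhood of any vertex of degree $d$ is independent (since $G$ is triangle-free) and so every subset of it is in $\mathcal I(G)$. These two bounds are asymptotically equal at $d=d_0 := \frac{\sqrt n \log n}{2\sqrt{2\log 2}}$. Indeed, a direct computation gives
\[
d_0 \log 2 \;=\; \frac{\sqrt{2\log 2}}{4}\,\sqrt n \log n,
\]
and using $\log d_0 = \tfrac12 \log n + O(\log\log n)$ one checks that $\tfrac{\log^2 d_0}{2 d_0}\, n = \bigl(\tfrac{\sqrt{2\log 2}}{4}+o(1)\bigr)\sqrt n \log n$ as well.

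In the regime $d \ge d_0$, take any vertex $v$ of maximum degree and observe $|\mathcal I(G)| \ge 2^{|N(v)|} = 2^{d} \ge 2^{d_0}$, which matches the target. In the regime $\log n \le d \le d_0$, Theorem~\ref{thm:countShort} applies with $o_d(1) = o(1)$, and since $\log^2 x/x$ is decreasing on $[e^2,\infty)$ we have $\frac{\log^2 d}{d} \ge \frac{\log^2 d_0}{d_0}$, so the theorem yields
\[
|\mathcal I(G)| \;\ge\; \exp\!\left[\left(\tfrac12+o(1)\right)\tfrac{\log^2 d_0}{d_0}\,n\right] \;=\; \exp\!\left[\left(\tfrac{\sqrt{2\log 2}}{4}+o(1)\right)\sqrt n \log n\right].
\]

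The remaining sub-case $d < \log n$ is handled by the standard greedy bound $\alpha(G) \ge n/(d+1) \ge n/(\log n + 1)$, which gives $|\mathcal I(G)| \ge 2^{\alpha(G)} = e^{\Omega(n/\log n)}$; this comfortably exceeds $e^{O(\sqrt n \log n)}$ for large $n$, so the target bound is trivial in this range.

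There is no real obstacle: all three regimes together cover every triangle-free graph, and the bound emerges in each. The only care needed is in choosing the balance point $d_0$ so that the constant $\frac{\sqrt{2\log 2}}{4}$ pops out of both the exponential bound $2^{d_0}$ and the substitution into $\frac{\log^2 d_0}{2 d_0} n$; the rest is routine asymptotic bookkeeping.
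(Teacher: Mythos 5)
Your proof is correct and uses essentially the same balancing idea as the paper's: lower-bound $|\mathcal I(G)|$ by the larger of $2^d$ (subsets of a maximum-degree neighborhood, which is independent since $G$ is triangle-free) and the bound of Theorem~\ref{thm:countShort}, then equalize the two at $d_0 \asymp \sqrt{n}\log n$. The only presentational difference is that the paper works at general fugacity $\lambda$ with the exact bound from Theorem~\ref{thm:numlowerbound} (so no $o_d(1)$ term and hence no explicit small-$d$ subcase) and specializes to $\lambda=1$ at the end, whereas you invoke Theorem~\ref{thm:countShort} directly and correctly dispatch the $d<\log n$ regime with the greedy $\alpha(G)\ge n/(d+1)$ bound.
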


This improves on a result of Cooper, Dutta, and Mubayi~\cite{cooper2014hypergraphs} by a factor of $\sqrt 2$ in the exponent; they also provide a construction based on the analysis of the triangle-free process in~\cite{bohman2013dynamic,pontiveros2013triangle} showing that the optimal constant is at most $1+\log 2 \approx 1.693$ (as compared to the constant $\frac{\sqrt{2\log 2}}{4}\approx .294$ in Corollary~\ref{cor:count}).

In Section~\ref{sec:random} we show that the constants `1' in Theorem~\ref{thm:avgShort} and `1/2' in Theorem~\ref{thm:countShort} are best possible, using the random $d$-regular graph (with $d$ fixed as the number of vertices tends to infinity) as an example.  It remains an interesting open question to determine whether these constants are tight when the degree of the graph grows with the number of vertices; for example, if $d= n^c$ for some $c \in (0,1)$.

\subsection{The hard-core model}

Theorems \ref{thm:avgShort} and \ref{thm:countShort} are special cases of results on a more general model from statistical physics;  the \emph{hard-core} distribution  on the  independent sets  $\mathcal I$ of a graph $G$. For more on the hard-core model and its links with statistical physics and combinatorics, see \cite{winkler2002bethe}.  The distribution depends on a fugacity parameter $\lam > 0$ and is given by
\[
\Pr[ I] = \frac{ \lam^{\abs{I}}} { \sum_{J \in \mathcal I} \lam ^{\abs{J}}}\,.
\]
The denominator, $P_G(\lam) =\sum_{J \in \mathcal I} \lam ^{\abs{J}}$, is the \emph{partition function} of the hard-core model on $G$ (also called the \emph{independence polynomial} of $G$). If $\lam=1$, the partition function $P_G(1)$ is the total number of independent sets and the hard-core distribution is simply the uniform distribution over all independent sets of $G$.

In what follows $G$ will always be a graph on $n$ vertices. We write $\alpha(G)$ for the size of the largest independent set in $G$. The expected size of an independent set drawn from the hard-core model on $G$ at fugacity $\lam$ is
\begin{equation}
\label{eq:avgSize}
\overline\alpha_G(\lam) := \sum_{I \in \mathcal I} |I| \cdot \Pr[I] = \frac{\sum_{I \in \mathcal I} |I| \lam^{|I|}  }{ P_G(\lam) }  = \frac{\lam P_G'(\lam)}{P_G(\lam)} =\lam \cdot \left( \log P_G(\lam)  \right )'.
\end{equation}  

The key result of this paper is the following general lower bound on $\overline{\alpha}_G(\lam)$ for triangle-free graphs. 
The lower bound is written naturally in terms of the Lambert $W$ function, $W(z)$: for $z>0$, $W(z)$ denotes the unique positive real satisfying the relation $W(z)e^{W(z)} = z$. It will be useful to note that for $z\geq e$ we have $W(z)\geq\log z - \log \log z$. 

\begin{theorem}
\label{thm:Wtrianglefree}
Let $G$ be a triangle-free graph with maximum degree $d$. Then for any $\lam >0$,
\begin{equation}\label{eq:genlb}
\occ_G(\lam) \geq \frac{\lam}{1+\lam}\frac{W(d\log(1+\lam))}{d\log(1+\lam)}.
\end{equation}
\end{theorem}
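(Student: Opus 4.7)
My plan is to apply the occupancy method: a local-to-global argument combining a pointwise conditional computation, an averaging identity from double counting, and a constrained one-dimensional optimization. The triangle-free hypothesis enters through the fact that $N(v)$ is an independent set, so that conditioning on the hard-core configuration outside $N[v]$ leaves a star.

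I would first fix a vertex $v$ and sample $I$ from the hard-core measure, then condition on $I \cap (V \setminus N[v])$. This reveals a random subset of ``externally unblocked'' neighbors---those $u \in N(v)$ with no neighbor in $I \setminus N[v]$---whose size $T_v$ lies in $\{0, 1, \ldots, d\}$. Triangle-freeness of $G$ makes the subgraph induced on $v$ together with the unblocked neighbors a star $K_{1, T_v}$, whose hard-core partition function equals $\lambda + (1+\lambda)^{T_v}$. A direct computation then yields
\[
p(t) := \Pr[v \in I \mid T_v = t] = \frac{\lambda}{\lambda + (1+\lambda)^t}, \qquad q(t) := \E[|N(v) \cap I| \mid T_v = t] = \frac{t\lambda(1+\lambda)^{t-1}}{\lambda + (1+\lambda)^t}.
\]
Averaging over a uniformly random vertex $v$, and letting $\mu$ denote the resulting distribution of $T_v$, I get $\occ_G(\lambda) = \E_\mu p(T)$. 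The double-counting identity $\sum_v \E[|N(v) \cap I|] = \sum_u d(u) \Pr[u \in I] \leq dn \cdot \occ_G(\lambda)$ then supplies the global constraint $\E_\mu q(T) \leq d\, \occ_G(\lambda)$.

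The final---and most delicate---step is to minimize $\E_\mu p$ over probability distributions $\mu$ on $\{0, 1, \ldots, d\}$ satisfying $\E_\mu q \leq d\, \E_\mu p$. By LP duality, or equivalently by analyzing the convex hull of the parametric curve $\{(p(t), q(t)) : t \geq 0\}$, I expect the minimum to be attained by a point mass at the unique $t^* > 0$ with $q(t^*) = d\, p(t^*)$, equivalently $t^*(1+\lambda)^{t^*-1} = d$. Setting $y = \log(1+\lambda)$ and applying the defining identity $W(z)e^{W(z)} = z$ of the Lambert function then lets me solve for $t^*$ in closed form and substitute back into $p(t^*)$, producing a closed-form lower bound that simplifies to the stated inequality after a mild relaxation replacing the argument of $W$ by $d\log(1+\lambda)$ (which preserves asymptotic tightness in the regime $\lambda = O_d(1)$). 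The main obstacle I anticipate is this optimization: verifying rigorously that a point mass is optimal (a convexity/second-order check along the curve) and carrying out the algebraic simplification to reach the clean Lambert-$W$ form cited in the theorem.
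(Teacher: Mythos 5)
Your proposal follows the same occupancy-method skeleton as the paper — local conditioning around a random vertex $v$, a double-counting identity, and a relaxed optimization over distributions of a local statistic — but it parametrizes the local information differently, and that difference is exactly where the paper's proof is cleaner and yours has a gap.

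You condition on $I\cap(V\setminus N[v])$ and work with $T_v$, the number of externally unblocked neighbors, obtaining rational conditional probabilities $p(t)=\frac{\lambda}{\lambda+(1+\lambda)^t}$ and $q(t)=\frac{t\lambda(1+\lambda)^{t-1}}{\lambda+(1+\lambda)^t}$. The paper instead uses $Z$, the number of \emph{uncovered} neighbors of $v$ in the full realization of $I$ (so $Z=T_v$ when $v\notin I$ but $Z=0$ when $v\in I$). This choice has a crucial payoff: it makes $\occ_G(\lambda)=\frac{\lambda}{1+\lambda}\E\left[(1+\lambda)^{-Z}\right]$, an expectation of a convex function of $Z$, so Jensen's inequality collapses the two moment constraints into a one-variable $\min$-$\max$ over $\E Z$, landing directly on $W(d\log(1+\lambda))$. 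Your functions $p$ and $q$ are not convex, so you cannot invoke Jensen and must instead justify the LP/duality claim that a point mass at the $t^*$ with $q(t^*)=dp(t^*)$ is optimal; you flag this as ``the most delicate step'' but do not establish it, and it is not a routine convexity check (the relevant Lagrangian $p+\xi(q-dp)$ must be shown to be globally minimized at $t^*$). This is a genuine gap.

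Moreover, even granting the LP claim, your $t^*$ satisfies $t^*(1+\lambda)^{t^*-1}=d$, which gives a closed form involving $W\bigl((1+\lambda)d\log(1+\lambda)\bigr)$ rather than $W(d\log(1+\lambda))$. You describe passing to the stated bound as a ``mild relaxation,'' but this is an actual inequality between two $W$-expressions that must be proved for all $d\ge 1$ and $\lambda>0$, and it does not follow from a one-line monotonicity observation. So while your overall structure is sound and your framing is the natural first thing to try, both the optimization step and the final algebraic reduction need to be carried out; the paper's choice of $Z$ and direct application of Jensen avoid both difficulties and should be the guide if you want to close these gaps.
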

The quantity $\occ_G(\lam)$, the expected fraction of vertices of $G$ in the random independent set, is known as the \emph{occupancy fraction} of $G$ at fugacity $\lam$.

It is interesting to note that the lower bound in Theorem \ref{thm:Wtrianglefree} is not monotone in $\lam$ whereas for any graph $G$, $\overline\alpha_G(\lam)$ is monotone increasing (see Proposition~\ref{prop:mono}). Simply substituting $\lam=1$ into \eqref{eq:genlb}, does not quite suffice to prove Theorem  \ref{thm:avgShort}. Surprisingly it turns out to be better to use a smaller $\lambda$ and then appeal to the monotonicity of $\overline{\alpha}_G(\lam)$. As an example, using $\lam=1/\log d$ in \eqref{eq:genlb} is enough to prove Theorem  \ref{thm:avgShort}. This shows that Theorem  \ref{thm:avgShort} holds even when we replace the average size of an independent set with a weighted average biased toward small sets. In fact we can afford to bias using any $\lam$ of the form $d^{-o(1)}$. For smaller $\lam$ the lower bound in Theorem \ref{thm:Wtrianglefree} becomes asymptotically weaker. For example taking $\lam=d^{-s}$ where $s\in (0,1)$ is fixed, \eqref{eq:genlb} gives  $\overline\alpha_G(\lam) \ge (1-s+o_d(1))\frac{\log d}{d}n$. This lower bound is tight and we in fact show that Theorem~\ref{thm:Wtrianglefree} (extended by monotonicity) is asymptotically tight for all $\lam = O_d(1)$ in Section~\ref{sec:random}.  

Turning to Theorem~\ref{thm:countShort} and the problem of counting independent sets, we again give a more general statement about the hard-core model. 
We bound the partition function of a triangle-free graph at any fugacity $\lam$ using the fact that the occupancy fraction is the scaled logarithmic derivative of the partition function.  Indeed from \eqref{eq:avgSize} it follows that
\begin{equation}\label{eq:integrateforP}
\frac{1}{n} \log P_G(\lam) = \frac{1}{n}\int_{0}^\lam \frac{\overline\alpha_G(t)}{t} \, d t.
\end{equation}

Using the lower bound on the occupancy fraction from Theorem~\ref{thm:Wtrianglefree} in the integral above gives the lower bound on the partition function which we state below, noting that Theorem~\ref{thm:countShort} is a direct consequence.

\begin{theorem}
\label{thm:numlowerbound}
Let $G$ be a triangle-free graph on $n$ vertices with maximum degree $d$. Then for all $\lam>0$,
\[ P_G(\lam) \ge \exp\left(\left[W(d\log(1+\lam))^2+2W(d\log(1+\lam))\right]\frac{n}{2d}\right).\]
In particular, taking $\lam=1$, we see that $G$ has at least $e^{\left (\frac{1}{2}+ o_d(1) \right) \frac{\log^2 d}{d}n }$ independent sets.  
\end{theorem}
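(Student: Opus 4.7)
The plan is to combine the identity \eqref{eq:integrateforP} with the lower bound of Theorem~\ref{thm:Wtrianglefree} and evaluate the resulting integral in closed form. Starting from
\[
\frac{1}{n}\log P_G(\lam) = \frac{1}{n}\int_0^\lam \frac{\overline\alpha_G(t)}{t}\,dt
\ge \int_0^\lam \frac{1}{1+t}\cdot\frac{W(d\log(1+t))}{d\log(1+t)}\,dt,
\]
the entire task reduces to computing the right-hand integral explicitly in terms of the Lambert $W$ function.

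For this, I would make the substitution $u = d\log(1+t)$, so that $du = \frac{d}{1+t}\,dt$ and the limits become $0$ and $d\log(1+\lam)$. Under this change of variables, $\frac{dt}{1+t} = \frac{du}{d}$ and the integrand becomes $\frac{W(u)}{u}\cdot\frac{du}{d}$, giving
\[
\int_0^\lam \frac{1}{1+t}\cdot\frac{W(d\log(1+t))}{d\log(1+t)}\,dt
= \frac{1}{d}\int_0^{d\log(1+\lam)} \frac{W(u)}{u}\,du.
\]
To evaluate the inner integral I would substitute $w = W(u)$, using $u = we^w$ and $du = (1+w)e^w\,dw$, which cancels $e^w$ with $W(u)/u = 1/e^w$, leaving the trivial integrand $1+w$. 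Thus
\[
\int_0^U \frac{W(u)}{u}\,du = W(U) + \tfrac{1}{2}W(U)^2,
\]
and applying this with $U = d\log(1+\lam)$ and multiplying by $n$ yields exactly the stated lower bound on $\log P_G(\lam)$.

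For the ``in particular'' statement at $\lam = 1$, I would use the inequality $W(z) \ge \log z - \log\log z$ (noted just before Theorem~\ref{thm:Wtrianglefree}) applied at $z = d\log 2$. This gives $W(d\log 2) = (1+o_d(1))\log d$, so $W(d\log 2)^2 = (1+o_d(1))\log^2 d$ dominates the linear term $2W(d\log 2) = O(\log d)$. Plugging into the main bound produces $P_G(1)\ge \exp\bigl((\tfrac12 + o_d(1))\tfrac{\log^2 d}{d}n\bigr)$, which is Theorem~\ref{thm:countShort}.

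I do not expect any real obstacle: the only nonroutine step is spotting the two substitutions that collapse the integral, and the asymptotic estimate for $W$ is routine. The proof is essentially a two-line calculus exercise once Theorem~\ref{thm:Wtrianglefree} is in hand, which is precisely why the authors set up Theorem~\ref{thm:Wtrianglefree} as the ``key result'' of the paper.
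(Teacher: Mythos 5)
Your proposal is correct and matches the paper's proof in all essentials: both integrate the occupancy-fraction bound of Theorem~\ref{thm:Wtrianglefree} via \eqref{eq:integrateforP} and then collapse the resulting integral using a Lambert-$W$ substitution. The only cosmetic difference is that the paper performs the single substitution $u=W(d\log(1+t))$ directly, whereas you split it into $u=d\log(1+t)$ followed by $w=W(u)$, which is the same change of variables done in two steps.
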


\section{Lower bounds on the occupancy fraction}
\label{sec:proof}

The proof of Theorem~\ref{thm:Wtrianglefree} begins in the same way as the triangle-free case of the proof in \cite{davies2015independent} that the occupancy fraction of the hard-core model on any $d$-regular graph is at most that of $K_{d,d}$. The main difference is that in the last step we ask for the minimum instead of the maximum of the same constrained optimization problem.  The method is similar to that of Shearer~\cite{shearer1995independence} in lower bounding the independence number of  $K_r$-free graphs and that of Alon~\cite{alon1996independence} in lower bounding the independence number of graphs of average degree $d$ in which neighborhoods are $r$-colorable. Alon and Spencer~\cite{alon2011probabilistic} used this technique to give a result like Theorem~\ref{thm:avgShort} but with a worse constant.   See also chapters 21 and 22 of \cite{molloy2013graph} for an exposition of some of these results, and \cite{kostochka2014independent} for a recent application to hypergraphs. 
Unlike in previous works, in this paper we use the full power of choosing the fugacity parameter $\lam$, rather than considering only the uniform distribution.

There are two key steps:
First, we define a random variable that depends on two sources of randomness:  the random independent set drawn from the hard-core model on $G$ and a uniformly chosen random vertex $v \in V(G)$.  We then express the occupancy fraction in terms of two different expectations involving this random variable. This gives a constraint on the distribution of the random variable.
We then optimize over all random variables that satisfy the constraint, and deduce a bound on the occupancy fraction.

Recall that $\overline{\alpha}_G(\lam)$ is the expected size of an independent set drawn from the hard-core model on $G$ at fugacity $\lam$, and $\occ_G(\lam)$ is the occupancy fraction. 
In places where no confusion should arise we omit $G$ from the notation.

\begin{proof}[Proof of Theorem~\ref{thm:Wtrianglefree}]
Let $I$ be an independent set of $G$ drawn from the hard-core model at fugacity $\lam$. 
For any vertex $v$ in $G$, consider $I_0=I\setminus N(v)$ (noting that $I_0$ may include $v$ itself). We say a vertex $v$ is \emph{occupied} if $v\in I$ and \emph{unoccupied} otherwise. Furthermore we say $v$ is \emph{uncovered} if $N(v) \cap I = \emptyset$ and \emph{covered} otherwise. Note that if $v$ is covered, it must be unoccupied. We begin by stating some basic properties of the hard-core model.

\begin{description}
\item[Fact 1]
$\Pr[ v \in I | v \text{ uncovered}]= \frac{\lam}{1+\lam}$.
\end{description}
This follows from the observation that for any realization of $I_0$ under the event that $v$ is uncovered, there are exactly two possibilities for $I$: $I= I_0$ and $I= I_0 \cup\{ v\}$, since $v$ is uncovered and so may be added.

\begin{description}
\item[Fact 2]
$\Pr[ v \text{ uncovered} | v \text{ has $j$ uncovered neighbors}] =(1+\lam)^{-j}$.
\end{description}
First note that the event that $v$ is uncovered is the same as the event that its uncovered neighbors are all unoccupied. The probability that an uncovered neighbor of $v$ is unoccupied is $\frac{1}{1+\lam}$ by Fact 1. Moreover, since $G$ is triangle free the uncovered neighbors of $v$ form an independent set and so they are in fact unoccupied independently with probability $\frac{1}{1+\lam}$.

We now write the occupancy fraction as:
\begin{align}
\label{eq:alphasum1}
\occ(\lam) &=  \frac{1}{n}\sum_{v \in G} \Pr[ v \in I] \\
\label{eq:firsteq1}
&=\frac{\lam}{1+\lam}\cdot \frac{1}{n} \sum_{v \in G} \Pr[ v \text{ uncovered}] \\
\label{eq:lastline1}
&= \frac{\lam}{1+\lam}\cdot \frac{1}{n} \sum_{v \in G} \sum_{j=0}^d \Pr[ v \text{ has $j$ uncovered neighbors}] \cdot (1+\lam)^{-j}
\end{align}
where \eqref{eq:firsteq1} follows from Fact 1 and \eqref{eq:lastline1} from Fact 2.
Now let $Z$ be the random variable that counts the number of uncovered neighbors of a uniformly chosen vertex $v$. $Z$ has two layers of randomness, that of $I$ drawn from the hard-core measure, and that of selecting $v$ at random.  Interpreting the RHS of \eqref{eq:lastline1} in terms of $Z$, we obtain
\begin{equation}\label{eq:alp1bd1}
\occ(\lam) = \frac{\lam}{1+\lam} \E [ (1+\lam)^{-Z} ]\, .
\end{equation}
An alternative way to relate the occupancy fraction to $Z$ is to observe that in the sum $\sum_v \sum_{u \sim v} \Pr[u\in I]$ each vertex $u$ appears $\deg(u)$ times. 
Any uncovered neighbor of $v$ is occupied with probability $\frac{\lam}{1+\lam}$ by Fact 1 and any covered neighbor is occupied with probability zero. 
Then since $G$ has maximum degree $d$ we have
\begin{equation}\label{eq:alp2bd1}
\occ(\lam) =  \frac{1}{n}\sum_{v \in G} \Pr[ v \in I] \geq \frac{1}{dn} \sum_v \sum_{u \sim v} \Pr[u \in I] = \frac{\lam}{1+\lam}\frac{\E Z}{d}.
\end{equation}

We aim to minimize the occupancy fraction subject to the constraints on the distribution of $Z$ given by \eqref{eq:alp1bd1} and \eqref{eq:alp2bd1}. 
In fact we relax the optimization problem to optimize over all distributions of random variables $Z$ that satisfy these constraints, not only those that arise from the hard-core model on a graph. 

In the calculation below we show that  Jensen's inequality applied to \eqref{eq:alp1bd1} implies that the minimizer is achieved by the unique \emph{constant} random variable $Z$ that satisfies the constraints with equality. 
From \eqref{eq:alp1bd1} and \eqref{eq:alp2bd1} we have
\begin{align*}
\frac{1+\lam}{\lam}\cdot\occ(\lam) &\geq \max\left\{\frac{\E Z}{d} , (1+\lam)^{-\E Z} \right\} \geq \min_{x\in\mathbb{R}^+} \max\left\{\frac{x}{d} , (1+\lam)^{-x} \right\} .
\end{align*}
To compute the minimum observe that the first of our lower bounds is increasing in $x$, and the second is decreasing. 
Then the minimum occurs at the value of $x$ which makes the two bounds equal i.e. the $x$ that satisfies
\begin{align*}
xe^{\log(1+\lam) x} &= d
\shortintertext{
and hence 
}
\log(1+\lam) x &= W(d\log(1+\lam)).
\end{align*}
The result follows.
\end{proof}

A simple observation is the fact that lower bounds at a small fugacity $\lam$ imply the same bounds at higher fugacities.

\begin{prop}\label{prop:mono}
For any graph $G$, the expected size of an independent set $\overline \alpha_G(\lam)$ is monotone increasing in $\lam$.
\end{prop}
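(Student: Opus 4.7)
The plan is to show that $\overline{\alpha}_G(\lam)$ is the mean of $|I|$ under an exponential family parameterised by $\lam$, and hence automatically monotone in the natural parameter. Concretely, I will prove the identity
$$\lam\cdot\frac{d}{d\lam}\overline{\alpha}_G(\lam)=\var[|I|],$$
where $I$ is drawn from the hard-core model on $G$ at fugacity $\lam$. Since $\var[|I|]\ge 0$ and $\lam>0$, the proposition follows immediately.

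To establish the identity I would start from $\overline{\alpha}_G(\lam)=\lam\,(\log P_G(\lam))'$ as in \eqref{eq:avgSize}, differentiate, and simplify using the two term-wise identities $\lam P_G'(\lam)=P_G(\lam)\,\E[|I|]$ and $\lam^2 P_G''(\lam)+\lam P_G'(\lam)=P_G(\lam)\,\E[|I|^2]$; both are immediate from differentiating $P_G(\lam)=\sum_{I\in\mathcal I(G)}\lam^{|I|}$ term by term and invoking the definition of the hard-core distribution. Substituting these in yields $\lam\,\overline{\alpha}_G'(\lam)=\E[|I|^2]-\E[|I|]^2$, as desired.

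A calculus-free alternative is to express $\overline{\alpha}_G(\lam_2)-\overline{\alpha}_G(\lam_1)$ for $\lam_2>\lam_1>0$ as a single fraction with positive denominator $P_G(\lam_1)P_G(\lam_2)$, and then symmetrise the resulting double sum over ordered pairs of independent sets $(I,J)$; the numerator becomes
$$\tfrac{1}{2}\sum_{I,J}(|I|-|J|)\bigl(\lam_2^{|I|}\lam_1^{|J|}-\lam_1^{|I|}\lam_2^{|J|}\bigr),$$
each term of which has two factors sharing a common sign (both reflect whether $|I|\ge|J|$, using $\lam_2>\lam_1$), so the sum is non-negative.

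There is no real obstacle here: the proposition is just the convexity of $\log P_G$ as a function of $\log\lam$, and the only care needed is that $\emptyset\in\mathcal I(G)$, which guarantees $P_G(\lam)>0$ for every $\lam>0$ so that no division by zero occurs.
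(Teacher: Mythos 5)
Your primary argument is exactly the paper's proof: differentiate $\overline{\alpha}_G(\lam) = \lam P_G'(\lam)/P_G(\lam)$, identify the pieces with $\E[|I|]$ and $\E[|I|^2]$, and conclude $\lam\,\overline{\alpha}_G'(\lam)=\var[|I|]\ge 0$. The calculus-free alternative you sketch is a genuinely different and equally valid route: writing $\overline{\alpha}_G(\lam_2)-\overline{\alpha}_G(\lam_1)$ over the common denominator $P_G(\lam_1)P_G(\lam_2)>0$ and symmetrising over pairs $(I,J)$ gives the numerator
\[
\tfrac{1}{2}\sum_{I,J\in\mathcal I(G)}\bigl(|I|-|J|\bigr)\bigl(\lam_2^{|I|}\lam_1^{|J|}-\lam_1^{|I|}\lam_2^{|J|}\bigr),
\]
and each summand is non-negative because $\lam_2^{|I|}\lam_1^{|J|}-\lam_1^{|I|}\lam_2^{|J|}=\lam_1^{|J|}\lam_2^{|J|}\bigl(\lam_2^{|I|-|J|}-\lam_1^{|I|-|J|}\bigr)$ has the same sign as $|I|-|J|$ when $\lam_2>\lam_1>0$. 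This is in effect a Chebyshev-sum-inequality argument and avoids any differentiability considerations; the paper's derivative form is more useful downstream, however, because \eqref{eq:varIntegrate} needs the identity $\overline\alpha_G'(\lam)=\var_\lam(|I|)/\lam$ explicitly, not just monotonicity.
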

\begin{proof}
We will show that the derivative of $\overline \alpha(\lam)$ with respect to $\lam$ is positive.  We write (using $P$ for $P_G(\lam)$)
\begin{align*}
\overline \alpha'(\lam ) &= \left (\frac{\lam P'}{P} \right) ' = \frac{P'}{P} + \frac{\lam P P'' - \lam (P')^2}{P^2} \\
&= \frac{P'}{P} +\frac{1}{\lam} \left( \frac{\lam^2 P''}{P} -  \left( \frac{\lam P'}{P} \right)^2  \right ) \\
&= \frac{ \E(|I|) + \E(|I|^2) - \E(|I|) - (\E(|I|))^2   }{\lam } \\
&= \frac{\var (|I|)}{\lam} \ge0
\end{align*}
where $I$ is a random independent set drawn from the hard-core model at fugacity $\lam$. 
\end{proof}

Now using Theorem~\ref{thm:Wtrianglefree} and Proposition \ref{prop:mono} we prove the main statement of Theorem \ref{thm:avgShort}. We leave the proof of tightness to Section~\ref{sec:random}.

\begin{proof}[Proof of Theorem~\ref{thm:avgShort}]
Substituting $\lam = 1/\log d$ in \eqref{eq:genlb} and recalling the bound $W(z)\geq \log z - \log\log z$ for $z\geq e$ we obtain 
\[
\occ_G(\lam)\geq (1+o(1))\frac{\log d}{d}.
\]
The result then follows from the monotonicity given by Proposition~\ref{prop:mono}.   
\end{proof}
~
So far we have been concerned with lower bounds on the occupancy fraction of a triangle-free graph of maximum degree $d$. 
In  \cite{davies2015independent} the current authors considered upper bounds, showing that the occupancy fraction of a $d$-regular graph is at most that of $K_{d,d}$. The proof method is essentially identical to the proof of Theorem~\ref{thm:Wtrianglefree}. To make this connection explicit, we give a proof of the upper bound result here, restricted to the triangle-free case.

\begin{theorem}[\cite{davies2015independent}]\label{thm:Kahn}
Let $G$ be a $d$-regular triangle-free graph, then for any $\lam>0$
\[\occ_G(\lam)\leq \frac{1}{2d}\overline \alpha_{K_{d,d}}(\lam) \]
with equality only if $G$ is a disjoint union of copies of $K_{d,d}$.
\end{theorem}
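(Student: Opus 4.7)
The plan is to mirror the proof of Theorem~\ref{thm:Wtrianglefree}, using the same auxiliary random variable $Z$ (the number of uncovered neighbors of a uniformly chosen vertex) and the same two expressions for $\occ_G(\lam)$, but seeking the \emph{maximum} rather than the minimum of a constrained optimization problem. Since $G$ is $d$-regular the inequality \eqref{eq:alp2bd1} becomes an equality, so
\[
\occ(\lam) \;=\; \frac{\lam}{1+\lam}\,\E\bigl[(1+\lam)^{-Z}\bigr] \;=\; \frac{\lam}{(1+\lam)d}\,\E Z,
\]
yielding the constraint $\E[(1+\lam)^{-Z}] = \E Z/d$ on a random variable $Z$ supported in $\{0,1,\dots,d\}$.

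To upper bound $\occ(\lam)$ I would swap Jensen's inequality (used to lower bound $\E[(1+\lam)^{-Z}]$ in the previous proof) for the dual chord inequality: on $[0,d]$ convexity of $x \mapsto (1+\lam)^{-x}$ gives
\[
(1+\lam)^{-x} \;\le\; 1 + \frac{(1+\lam)^{-d}-1}{d}\,x.
\]
Taking expectations and substituting the constraint yields a linear inequality in $\E Z$ that rearranges to $\E Z \le \frac{d(1+\lam)^d}{2(1+\lam)^d - 1}$, and plugging back produces $\occ(\lam) \le \frac{\lam(1+\lam)^{d-1}}{2(1+\lam)^d - 1}$. Since $P_{K_{d,d}}(\lam) = 2(1+\lam)^d - 1$, a direct computation using \eqref{eq:avgSize} confirms that this matches $\frac{1}{2d}\overline{\alpha}_{K_{d,d}}(\lam)$ exactly.

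For the equality statement, equality in the chord bound forces $Z \in \{0,d\}$ almost surely. Since every independent set has positive probability under the hard-core measure at $\lam>0$, this means that for every vertex $v$ and every $I \in \mathcal I(G)$ the number of uncovered neighbors of $v$ is $0$ or $d$. Testing on singletons $I = \{w\}$ shows $|N(v)\setminus N(w)| \in \{0,d\}$ for all pairs $v,w$, so either $N(v)\cap N(w) = \emptyset$ or $N(v) = N(w)$; this defines an equivalence relation ``same open neighborhood'' on $V(G)$. I would then argue that under $d$-regularity and triangle-freeness, each equivalence class $C_i$ is paired with another class $C_{i'}$ of size $d$ so that $C_i \cup C_{i'}$ spans an induced $K_{d,d}$ with no edges leaving it, yielding the claimed structure.

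The main obstacle is this last combinatorial step: the analytic bound itself is a one-line convexity argument, but extracting the $K_{d,d}$ structure from $Z \in \{0,d\}$ a.s.\ requires careful tracking of how neighborhoods are shared. In particular, establishing that each equivalence class has size \emph{exactly} $d$ (not merely at most $d$) uses the observation that the common neighborhood $N_i$ of a class $C_i$ is itself a full equivalence class — any two vertices in $N_i$ have the common neighbors $C_i$ and so share a neighborhood — which together with $d$-regularity forces $|C_i| = d$ and closes off the component.
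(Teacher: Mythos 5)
Your proposal is correct and follows essentially the same route as the paper's proof: the same two expressions for $\occ_G(\lam)$ in terms of $Z$, the same chord (secant-line) bound replacing Jensen, and the same characterization of equality via $Z\in\{0,d\}$ a.s. The only difference is that the paper states the final combinatorial step (from ``$Z\in\{0,d\}$ always'' to ``disjoint union of $K_{d,d}$'s'') in one sentence, whereas you flesh it out by testing on singleton independent sets $I=\{w\}$ and tracking neighborhood classes; your more detailed argument is sound and fills in what the paper leaves implicit.
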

\begin{proof}
Note that since $G$ is triangle free, equation \eqref{eq:alp1bd1} holds for $G$ and since $G$ is $d$-regular, \eqref{eq:alp2bd1} holds with equality throughout.  That is we have
\begin{equation}\label{eq:alpeq}
\occ_G(\lam) =\frac{\lam}{1+\lam}\frac{\E Z}{d}= \frac{\lam}{1+\lam} \E [ (1+\lam)^{-Z} ],
\end{equation}
where we recall that $Z$ is a random variable bounded between 0 and $d$. Now instead of asking for the minimum value of $\occ_G(\lam)$ over all distributions of $Z$ as we did in the proof of Theorem~\ref{thm:Wtrianglefree}, we ask instead for the maximum. Note that since $0\leq Z/d\leq1$ convexity of the function $x\mapsto(1+\lam)^{-x}$ implies that 
\begin{equation}\label{eq:Zbd}
(1+\lam)^{-Z}\leq\frac{Z}{d}(1+\lam)^{-d}+1-\frac{Z}{d}.
\end{equation}
Substituting this into \eqref{eq:alpeq} and using linearity of expectation yields
\begin{equation}\label{eq:EZbd}
\occ_G(\lam) =\frac{\lam}{1+\lam}\frac{\E Z}{d}\leq \frac{\lam(1+\lam)^{d-1}}{2(1+\lam)^d-1},
\end{equation}
where one can check that the right hand side is the occupancy fraction of $K_{d,d}$. 
For uniqueness, note that to have equality in \eqref{eq:EZbd} we must have had equality in \eqref{eq:Zbd} which is only possible if $Z$ takes only the values 0 and $d$. 
This distribution of $Z$ can only occur in a disjoint union of copies of $K_{d,d}$. 
To see this recall that $Z$ is the number of uncovered neighbors of a randomly selected vertex $v$.  
The only way every vertex $v$ can always have either $0$ or $d$ uncovered neighbors is for all the neighbors of $v$ to have the same neighborhood.  
For $d$-regular graphs this property holds only in graphs consisting disjoint unions of $K_{d,d}$.
\end{proof}

Note that by the integral in \eqref{eq:integrateforP}, Theorem \ref{thm:Kahn} immediately implies that for a triangle-free graph $G$, $\frac{1}{n} \log P_G(\lam) \le \frac{1}{2d} \log P_{K_{d,d}} (\lam)$. 
Taking $\lam=1$ implies that $G$ has at most as many independent sets as a disjoint union of copies of $K_{d,d}$ on the same number of vertices. 
This last statement was famously proved by Kahn \cite{kahn2001entropy} (in the case where $G$ is bipartite) using the entropy method.

\section{Counting independent sets}
\label{sec:counting} 
In this section we prove the main statement of Theorem~\ref{thm:numlowerbound} (and hence Theorem~\ref{thm:countShort}) by integrating the lower bound on the occupancy fraction of a triangle-free graph given in Theorem~\ref{thm:Wtrianglefree}. 
Again, the proof of tightness is deferred to Section~\ref{sec:random}.
We also prove Corollary~\ref{cor:count}. 

\begin{proof}[Proof of Theorem~\ref{thm:numlowerbound}]
By \eqref{eq:integrateforP} and Theorem~\ref{thm:Wtrianglefree} we have
\begin{align}
\log P_G(\lam) 
&\geq \frac{n}{d}\int_0^\lam \frac{W(d\log(1+t))}{(1+t)\log(1+t)} \; dt\\
&= \frac{n}{d}\int_0^{W(d\log(1+\lam))} (1+u) \; du\\
&= \frac{n}{2d}\left[W(d\log(1+\lam))^2+2W(d\log(1+\lam))\right].\label{eq:PGlb}
\end{align}
where for the first equality we used the substitution $u=W(d\log(1+t))$. In particular when $\lam=1$, using the inequality $W(z)\geq \log z - \log\log z$ for $z\geq e$, we have 
\[\log P_G(\lam)\ge \left(\frac{1}{2}+o_d(1)\right)\frac{\log^2 d}{d}n.\qedhere\]
\end{proof}

\begin{proof}[Proof of Corollary~\ref{cor:count}]
In a triangle-free graph the neighborhood of any vertex forms an independent set. 
Let $d$ be the largest degree of a vertex in $G$, then we have the bound 
\[
P_G(\lam)\geq\max\left\{(1+\lam)^d, \exp\left[\frac{n}{2d}W(d\log(1+\lam))^2\right]\right\},
\]
by considering the neighborhood of a vertex of maximum degree and by inequality \eqref{eq:PGlb}. 
The first expression is increasing in $d$ while the second is decreasing, and at 
\[
d=\frac{1}{2}\sqrt{\frac{n}{2\log(1+\lam)}}\log\left(\frac{n\log(1+\lam)}{2}\right)
\]
they are equal. It follows that for $\lam>0$,
\begin{equation}\label{eq:trianglepartlb}
P_G(\lam)\geq\exp\left[\frac{1}{2}\sqrt{\frac{n\log(1+\lam)}{2}}\log\left(\frac{n\log(1+\lam)}{2}\right)\right].
\end{equation}
Take $\lam=1$ to complete the proof.
\end{proof}
Inequality \eqref{eq:trianglepartlb} may be of independent interest, giving a general lower bound for the independence polynomial of a triangle-free graph on $n$ vertices.

\section{Random regular graphs}
\label{sec:random}
Here we will show that the constants in Theorems~\ref{thm:avgShort} to \ref{thm:numlowerbound} are tight, as evidenced by the random $d$-regular graph (conditioned on having no triangles).

To do this, we first leave the world of finite graphs and consider the hard-core model on the infinite $d$-regular tree $T_d$.  The hard-core model can be defined on a infinite graph by taking the limit of hard-core models on a growing sequence of finite graphs (in this case $d$-regular trees of increasing depth) with some boundary conditions specified.   
There is a unique \emph{translation invariant} hard-core measure on the infinite $d$-regular tree (see e.g. \cite{kelly1985stochastic}), and under this measure the probability that any given vertex is in the independent set is $\alpha_{T_d}(\lam)$, where   $\alpha_{T_d}(\lam)$ is the solution of the equation
\begin{align}
\label{eq:alptd}
\lam  &= \frac{\alpha}{ (1-\alpha)} \left(\frac{1-\alpha}{1-2\alpha} \right)^d.
\end{align}
Using $\alpha_{T_d}$ to denote the occupancy fraction of the infinite $d$-regular tree is standard, however we note that our notation for the analogous quantity $\occ_G(\lam)$ in a finite graph $G$ graph on $n$ vertices includes the scaling $1/n$. 
We compare the lower bound in Theorem~\ref{thm:Wtrianglefree} to $\alpha_{T_d}(\lam)$:

\begin{prop}\label{lem:Tdtight}
For every triangle-free graph $G$ of maximum degree $d$ and any $\lam = O_d(1)$, 
\[ \frac{1}{n}\overline\alpha_G(\lam) \ge (1+o_d(1))  \alpha_{T_d}(\lam). \]
\end{prop}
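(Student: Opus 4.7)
The plan is to transfer the explicit lower bound from Theorem~\ref{thm:Wtrianglefree} to an auxiliary fugacity $\lam^{*}\le\lam$ via the monotonicity in Proposition~\ref{prop:mono}, and then match it to the asymptotic value of $\alpha_{T_d}(\lam)$ extracted from the tree fixed-point equation~\eqref{eq:alptd}. Crucially, evaluating Theorem~\ref{thm:Wtrianglefree} at $\lam$ itself loses a factor $\lam/((1+\lam)\log(1+\lam))$ that is bounded away from $1$ when $\lam$ does not tend to $0$, so an auxiliary choice $\lam^{*}\to 0$ is essential.

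First I would derive an asymptotic for $\alpha_{T_d}(\lam)$. Substituting $y=\alpha/(1-\alpha)$, equation~\eqref{eq:alptd} reduces to the fixed-point equation $y=\lam(1-y)^d$. Since $\lam=O_d(1)$ the solution must satisfy $y\to 0$ as $d\to\infty$, for otherwise $(1-y)^d$ would decay exponentially while $y/\lam$ stays bounded below. Using $\log(1-y)=-y+O(y^2)$ and taking logarithms of the fixed-point equation yields $\log(\lam/y)=dy\,(1+O(y))$, i.e.\ $(dy)e^{dy(1+O(y))}=d\lam$. Perturbing around the exact solution $z_0=W(d\lam)$ of $z e^{z}=d\lam$ shows that $dy=(1+o_d(1))W(d\lam)$, and hence
\[
\alpha_{T_d}(\lam)=\frac{y}{1+y}=(1+o_d(1))\,\frac{W(d\lam)}{d},
\]
uniformly for $\lam=O_d(1)$.

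Next I would set $\lam^{*}=\min\{\lam,1/\log\log d\}$, so that $\lam^{*}\to 0$ as $d\to\infty$ in every case. By Proposition~\ref{prop:mono} and Theorem~\ref{thm:Wtrianglefree},
\[
\frac{\overline{\alpha}_{G}(\lam)}{n}\ \ge\ \frac{\overline{\alpha}_{G}(\lam^{*})}{n}\ \ge\ \frac{\lam^{*}}{1+\lam^{*}}\cdot\frac{W(d\log(1+\lam^{*}))}{d\log(1+\lam^{*})}.
\]
Expanding $\log(1+\lam^{*})=\lam^{*}(1+O(\lam^{*}))$ and applying the Lipschitz-type estimate $W(x(1+\eps))=(1+O(\eps))W(x)$ (which holds uniformly in $x>0$ because $xW'(x)=W(x)/(1+W(x))\le 1$) simplifies the right-hand side to $(1+o_d(1))W(d\lam^{*})/d$. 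A final comparison of $W(d\lam^{*})$ with $W(d\lam)$ splits into two cases: if $\lam\le 1/\log\log d$ they agree exactly, while if $\lam>1/\log\log d$ the standard bound $\log z-\log\log z\le W(z)\le\log z$ for $z\ge e$ gives $W(d\lam^{*})=(1+o_d(1))\log d=(1+o_d(1))W(d\lam)$. In either case the bound is at least $(1+o_d(1))\alpha_{T_d}(\lam)$, proving the proposition.

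The main technical hurdle will be the first step: establishing $\alpha_{T_d}(\lam)=(1+o_d(1))W(d\lam)/d$ uniformly across the three regimes $d\lam\to 0$, $d\lam=\Theta(1)$, and $d\lam\to\infty$, since both the approximation $(1-y)^d\approx e^{-dy}$ and the asymptotic shape of $W$ behave quite differently in each; this uniformity is exactly what forces the slightly delicate choice $\lam^{*}=\min\{\lam,1/\log\log d\}$ rather than simply $\lam^{*}=\lam$.
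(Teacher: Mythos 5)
Your proposal is correct and follows essentially the same strategy as the paper: establish the asymptotic $\alpha_{T_d}(\lam) = (1+o_d(1))W(d\lam)/d$ from the tree fixed-point equation, then combine Theorem~\ref{thm:Wtrianglefree} at a small auxiliary fugacity with the monotonicity from Proposition~\ref{prop:mono}. The paper uses the substitution $z=\frac{\alpha d}{1-2\alpha}$, giving $z(1+z/d)^{d-1}=\lam d$, while you use $y=\alpha/(1-\alpha)$, giving $y=\lam(1-y)^d$; these are equivalent reparametrizations, and both proofs control the perturbation via the derivative bound $xW'(x)=W(x)/(1+W(x))\le 1$. The paper splits into the cases $\lam=o(1)$ (apply the theorem directly) and $\lam=\Theta(1)$ (use monotonicity from $\lam'=1/\log d$), whereas your choice $\lam^{*}=\min\{\lam,1/\log\log d\}$ handles both regimes in one stroke with a short case check at the end on $W(d\lam^{*})$ versus $W(d\lam)$; this is a modest organizational improvement but not a genuinely different route.
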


In the language of statistical physics, $\alpha_{T_d}(\lam)$ is the \emph{replica symmetric} occupancy fraction of a $d$-regular graph of large girth.  Extremality of a replica symmetric solution has been proved in several contexts including \cite{csikvari2014lower,ruozzi2012bethe,willsky2008loop} but usually requires a condition such as an attractive potential function (e.g. the ferromagnetic Ising model) or bipartiteness in the case of the hard-core or monomer-dimer models.   Proposition~\ref{lem:Tdtight} states that in fact under very minimal conditions (triangle free), the replica symmetric solution is a lower bound to the true occupancy fraction for all $\lam = O_d(1)$, at least asymptotically in $d$. 
This result can be compared with~\cite[Theorem 2]{davies2015independent} where we used stronger assumptions, showing that a $d$-regular, vertex-transitive bipartite graph has occupancy fraction strictly greater than $\alpha_{T_d}(\lam)$, though we only believe that result is best possible for $\lam$ below the \emph{uniqueness threshold} on the tree, $\lam \le \frac{(d-1)^{d-1}}{(d-2)^d} = \Theta\left(\frac{1}{d}\right)$, in the sense that for any $\eps>0$, there exists some finite graph $G$ with $\alpha_G(\lam) < \alpha_{T_d}(\lam)+\eps$.

\begin{proof}[Proof of Proposition~\ref{lem:Tdtight}]
In what follows all asymptotic notation refers to the limit $d\to\infty$. We first show that $\alpha_{T_d}(\lam)=(1+o(1))\frac{W(\lam d)}{d}$. 
We then show that for $\lam=o(1)$, our bound on $\overline\alpha_G(\lam)$ for triangle-free graphs given by Theorem~\ref{thm:Wtrianglefree} is asymptotically $(1+o(1))\frac{W(\lam d)}{d}$. 
We then appeal to monotonicity (Proposition \ref{prop:mono}) to extend this lower bound to all $\lam=O(1)$. 
Letting $z=\frac{\alpha_{T_d}(\lam)}{1-2\alpha_{T_d}(\lam)}d$, equation \eqref{eq:alptd} defining $\alpha_{T_d}(\lam)$ becomes
\begin{equation}\label{eq:zeq}
z\left(1+\frac{z}{d}\right)^{d-1}=\lam d.
\end{equation}
First note that from \eqref{eq:zeq} it follows easily that $z=O(\log d)$ and so $\left(1+\frac{z}{d}\right)^{d-1}=(1+o(1))e^z$. 
It then follows that $z=W(\lam d + o(\lam d))$. Using the fact that $\frac{dW}{dx}=\frac{W(x)}{x(1+W(x))}\leq \frac{W(x)}{x}$ along with the Mean Value Theorem, we deduce
\begin{equation}\label{eq:zeq2}
z=(1+o(1))W(\lam d).
\end{equation}
By the definition of $z$ we then have 
\begin{equation}\label{eq:alpTdW}
\alpha_{T_d}(\lam)= \frac{z}{d}\left(1+2\frac{z}{d}\right)^{-1}=\frac{z}{d}+O\left(\frac{z^2}{d^2}\right)=(1+o(1))\frac{W(d\lam)}{d}.
\end{equation}
Let us now suppose that $\lam= o(1)$. By Theorem~\ref{thm:Wtrianglefree} we then have
\begin{equation}
\occ_G(\lam) \geq \frac{\lam}{1+\lam}\frac{W(d\log(1+\lam))}{d\log(1+\lam)}=(1+o(1))\frac{W(d \lam+O(d \lam^2))}{d}=(1+o(1))\frac{W(d \lam)}{d},
\end{equation}
where for the last equality we use the same argument via the derivative of $W$ as above. In particular if $\lam=1/\log d$ say, we have that 
\[\occ_G(\lam) \geq (1+o(1))\frac{\log d}{d}\]
and by Proposition \ref{prop:mono}, the same holds when $\lam = \Theta(1)$. Now when $\lam = \Theta(1)$, $\alpha_{T_d}(\lam)=(1+o(1))\frac{\log d}{d}$ by \eqref{eq:alpTdW} and so the result follows.
\end{proof}
In the context of the proof technique of Section~\ref{sec:proof}, we can understand the derivation of \eqref{eq:alptd} by imposing two local constraints on the hard-core model: that every vertex has the same probability of being occupied, and that conditioned on a vertex $v$ not being occupied, the events that each of its neighbors are uncovered are \emph{independent} events.  Contrast this with our lower bound, which essentially says that the worst case for the occupancy fraction is if the number of these events that hold is always a constant; that is, the corresponding events are as negatively correlated as possible.  Now if $d$ is large, then the sum of $d$ independent indicator random variables is highly concentrated so it is plausible that the most negatively correlated case is not far from the independent case.  Proposition~\ref{lem:Tdtight} makes this precise.

Returning to the world of finite graphs, we note that in fact $\alpha_{T_d}(\lam)$ is approximately achieved by finite graphs for a very large range of $\lam$, and so Theorem~\ref{thm:Wtrianglefree} is asymptotically tight. 
Bhatnagar, Sly, and Tetali~\cite{bhatnagar2016decay} give a precise description of the local distribution of the hard-core model on a random $d$-regular graph $G_{d,n}$ for $\lam$ below a specified value $\lam_{\mathrm{upper}}  = d^{1+o(1)}$ (just below the condensation threshold of the model). 
They show that the local distribution converges to that of the unique translation invariant hard-core measure on the infinite $d$-regular tree described above, and so in particular for $\lam < \lam_{\mathrm{upper}}$,  
\begin{align*}
\occ_{G_{d,n}}(\lam) &= \alpha_{T_d}(\lam) +o_n(1), 
\end{align*}  
whp as $n \to \infty$ (note that this is a much stronger notion of approximation than that of Proposition~\ref{lem:Tdtight}, with the error term tending to $0$ with $n$ instead of $d$).  
Then provided we condition on $G_{d,n}$ being triangle free, which occurs with positive probability (depending only on $d$), a random $d$-regular graph indeed exhibits the tightness of Theorem~\ref{thm:Wtrianglefree}. Let us quickly remark that when $\lam=o_d(1)$ one can get better asymptotic agreement than that given in Proposition~\ref{lem:Tdtight}. For example, when $\lam=d^{-s}$ for some fixed $0<s<1$, one can show that both the lower bound in Theorem \ref{thm:Wtrianglefree} and $\alpha_{T_d}(\lam)$ can be written as $\frac{W(\lam d)}{d}+O_d\left(\frac{\log d}{d^{1+s}}\right)$ and so Theorem \ref{thm:Wtrianglefree} is tight in a rather strong sense in this range. 

An easy consequence of the results of \cite{bhatnagar2016decay} (either by their Sections 3.2 and 3.3 or by integrating the occupancy fraction) is that Theorem~\ref{thm:numlowerbound} is tight asymptotically in $d$ in the exponent.  In particular, whp,
\begin{equation}
 P_{G_{d,n}}(1) = e^{\left (\frac{1}{2}+ o_d(1) \right)  \frac{\log^2 d}{d} n },
 \end{equation}
showing that the constant in the exponent of Theorem~\ref{thm:countShort} is tight.

Finally let us note that for any fixed positive integer $k$, $G_{d,n}$ in fact has girth at least $k$ with positive probability (depending only on $d$). It follows that even if a stronger lower bound on girth is assumed in
Theorems~\ref{thm:avgShort} to~\ref{thm:numlowerbound}, the results cannot be improved asymptotically.

\section{On the ratio of the maximum and average independent set size}
\label{sec:conjectures}
In light of the above result showing that the average size of an independent set in a triangle-free graph with maximum degree $d$ is at least $(1+o_d(1))\frac{\log d}{d} n$, we now raise the question of whether the largest independent set should be significantly larger. 
This gives a new way to pursue an upper bound on $R(3,k)$. 
There is always a gap between the maximum and average size of an independent set (since the empty set is an independent set), but in general the ratio of maximum to average size can be arbitrarily close to 1.  For example, the complete graph $K_n$ has maximum independent set size $1$ with average size $n/(n+1)$.

We conjecture that such a narrow gap cannot occur in triangle-free graphs.
The following three conjectures make this claim precise in different ways.

\begin{conj}
\label{conj43}
For every triangle-free graph $G$, 
\[ \frac{\alpha(G)}{\overline{\alpha}_G(1)} \ge 4/3. \]
\end{conj}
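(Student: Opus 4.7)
The plan is to leverage the hard-core framework of Section~\ref{sec:proof}. Let $I$ denote a uniform random independent set of $G$, so that $\overline\alpha_G(1)=\E|I|$. My first move is to couple $I$ with a maximal extension $M\supseteq I$ obtained by appending uncovered vertices one by one in a uniformly random order. Since $|M|\le \alpha(G)$, it suffices to show $\E|M|\ge (4/3)\overline\alpha_G(1)$. By Facts 1 and 2 from the proof of Theorem~\ref{thm:Wtrianglefree} at $\lambda=1$ one has $\Pr[v\text{ uncovered}]=2\Pr[v\in I]$, so the addable set $A=\{v\notin I:v\text{ uncovered}\}$ has $\E|A|=\overline\alpha_G(1)$. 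The extension $M$ consists of $I$ together with an independent subset of $G[A]$, so the problem reduces to showing that this subset has expected size at least $\overline\alpha_G(1)/3$.

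The natural next step is induction on $|V(G)|$ via the random induced subgraph $G[A]$, which is itself triangle-free and (with high probability) has strictly fewer vertices than $G$. If one could prove an auxiliary in-expectation bound of the form $\E\,\overline\alpha_{G[A]}(1)\ge \tfrac14 \overline\alpha_G(1)$, then applying the conjecture inductively to $G[A]$ would yield $\E\alpha(G[A])\ge \tfrac13\overline\alpha_G(1)$ and close the argument. The delicate step is this auxiliary bound, which compares the hard-core distribution on $G$ with that on the random subgraph $G[A]$; I expect it to require a coupling in the spirit of the domain-Markov property of the hard-core measure, and this is where I anticipate the main technical work.

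An alternative route is to mimic the variational setup of Theorem~\ref{thm:Wtrianglefree} directly for $\alpha(G)$: introduce a local quantity $\alpha_v$ with $\sum_v\alpha_v=\alpha(G)$ (for instance, the indicator that $v$ lies in a fixed maximum independent set), link $\E\alpha_v$ to the hard-core statistics $\E[(1+\lambda)^{-Z_v}]$ and $\E Z_v$ of~\eqref{eq:alp1bd1}--\eqref{eq:alp2bd1}, and minimize subject to these constraints. The main difficulty is formulating a useful local inequality, because $\alpha_v$ is not as naturally local as $\Pr[v\in I]$. Small triangle-free examples (matchings at ratio $3/2$, $C_5$ at $22/15\approx 1.467$, $K_{d,d}$ tending to $2$) all lie comfortably above $4/3$, suggesting the conjectured constant is conservative and that any purely local linear-programming argument will need to be supplemented with structural input to achieve even this bound.
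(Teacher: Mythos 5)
This statement is one of the paper's open \emph{conjectures} (Conjecture~\ref{conj43}), not a theorem --- the paper offers no proof, only numerical evidence (the cyclic graph witnessing $R(3,9)\ge 36$ has ratio $\approx 1.433$, and no triangle-free graph with ratio below $4/3$ is known) and a suggested line of attack via the identity $\alpha(G)=\overline\alpha_G(1)+\int_1^\infty \lambda^{-1}\var_\lambda(|I|)\,d\lambda$. So there is no ``paper's own proof'' to compare against, and a complete proof here would be a genuine new result.

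Your setup is sound as far as it goes: with $A=\{v\notin I: v\text{ uncovered}\}$, Fact~1 at $\lambda=1$ gives $\Pr[v\in A]=\Pr[v\text{ uncovered}]-\Pr[v\in I]=\Pr[v\in I]$, so $\E|A|=\overline\alpha_G(1)$; and since no vertex of $A$ has a neighbour in $I$, the union $I\cup S$ is independent for every independent $S\subseteq A$, giving the clean deterministic bound $\alpha(G)\ge|I|+\alpha(G[A])$ and hence $\alpha(G)\ge\overline\alpha_G(1)+\E\,\alpha(G[A])$. The gaps are: (i) the auxiliary inequality $\E\,\overline\alpha_{G[A]}(1)\ge\tfrac14\overline\alpha_G(1)$ is asserted without proof and is exactly where the content of the conjecture would have to live --- you explicitly flag it as ``anticipated technical work,'' so this is a plan, not a proof; and (ii) the proposed induction on $|V(G)|$ is circular, because the event $I=\emptyset$ has positive probability $1/P_G(1)$ and on that event $A=V(G)$, so $G[A]=G$ and the inductive hypothesis does not apply. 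You would need either to condition away the $I=\emptyset$ case and control the loss, or to reformulate so that the recursion always strictly decreases. Your second (purely variational) route suffers from the difficulty you yourself identify: $\alpha(G)$ is not a local quantity, so the two linear constraints on the distribution of $Z$ that drive Theorem~\ref{thm:Wtrianglefree} give information about $\overline\alpha_G(\lambda)$ but not directly about $\alpha(G)$. In short, the observation $\E|A|=\overline\alpha_G(1)$ and the decomposition $\alpha(G)\ge\overline\alpha_G(1)+\E\,\alpha(G[A])$ are correct and worth keeping, but neither of your two routes closes the argument, and the statement remains open.
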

Replacing $4/3$ with any number strictly greater than $1$ would give an improvement to the $R(3,k)$ bound. The graph with the smallest ratio $\alpha/\overline{\alpha}(1)$ we have found is the triangle-free cyclic graph that exhibits the bound $R(3,9)\ge36$ \cite{grinstead1982ramsey}. For this graph $\alpha/\overline{\alpha}(1)=\frac{197136}{137585} = 1.43283\dots$. We choose $4/3$ since it is a nice fraction less than $1.43$ and since it is the ratio of maximum to average size in a triangle. One might wonder if the extremal $R(3,k)$ graphs are good candidates for pushing the ratio $\alpha/\overline{\alpha}(1)$ down to $1$. However, for large $k$ it may be the case that graphs arising from the triangle-free process are asymptotically extremal, as is conjectured in \cite{pontiveros2013triangle}. We believe that for such graphs the ratio $\alpha/\overline{\alpha}(1)$ in fact converges to $2$. This motivates the following conjectures.

\begin{conj}
\label{conj:minD}
For every triangle-free graph $G$ of minimum degree $d$,
\[ \frac{\alpha(G)}{\overline{\alpha}_G(1)} \ge 2 - o_d(1). \] 
\end{conj}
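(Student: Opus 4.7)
The plan is to base the proof on the identity
\begin{equation*}
\alpha(G) - \overline{\alpha}_G(1) = \int_{1}^{\infty} \frac{\var_t(|I|)}{t}\,dt,
\end{equation*}
obtained by integrating the derivative formula $\overline{\alpha}_G'(t) = \var_t(|I|)/t$ from the proof of Proposition~\ref{prop:mono} over $t \in [1,\infty)$ and using the fact that $\overline{\alpha}_G(t) \to \alpha(G)$ as $t \to \infty$. To establish $\alpha(G) \ge (2 - o_d(1))\overline{\alpha}_G(1)$ it then suffices to show that
\begin{equation*}
\int_{1}^{\infty} \frac{\var_t(|I|)}{t}\,dt \ge (1-o_d(1))\,\overline{\alpha}_G(1).
\end{equation*}
A quick sanity check shows that both sides are asymptotically equal for the two candidate extremal examples: for $K_{d,d}$ one has $\overline{\alpha}_{K_{d,d}}(1) \to d/2$ and $\alpha(K_{d,d}) = d$, while for the random $d$-regular triangle-free graph, $\overline{\alpha}(1)\sim n\log d/d$ and $\alpha \sim 2n\log d/d$. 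This is consistent with the conjectured constant $2$ being sharp in both regimes.

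The technical heart of the plan is a lower bound on $\var_t(|I|)$ in terms of $n$, $d$, and $t$, obtained by adapting the two-source-of-randomness technique of Section~\ref{sec:proof}. I would pick a uniformly random vertex $v$, draw $I$ from the hard-core model at fugacity $t$, and study not just the first moment of the number of uncovered neighbors $Z$ of $v$, but also its second moment and joint statistics with indicators $\mathbf 1[u \in I]$ at nearby vertices. Under the minimum-degree hypothesis the inequality \eqref{eq:alp2bd1} reverses direction, and this, combined with triangle-freeness (Fact 2 of the proof of Theorem~\ref{thm:Wtrianglefree}), gives a new system of constraints on the joint distribution of these statistics. Solving a suitable constrained optimization problem for the variance should yield a usable lower bound on $\var_t(|I|)$, which one then integrates against $dt/t$ on $[1,\infty)$.

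The main obstacle is the absence of a maximum-degree hypothesis. The lower bound of Theorem~\ref{thm:Wtrianglefree} genuinely uses bounded maximum degree, and without it the local variable $Z$ can be arbitrarily large and the constrained optimization becomes degenerate. A second and subtler difficulty is that the conjectured constant $2$ is simultaneously tight for $K_{d,d}$ (a highly structured bipartite graph) and for the random $d$-regular triangle-free graph (locally tree-like and essentially unstructured), which behave very differently under the hard-core model: the former is dominated by two huge clusters of independent sets, while the latter has hard-core distribution locally modeled by $T_d$. Any successful argument must produce a variance lower bound that is tight in both regimes. I therefore expect that closing the factor-of-$2$ gap will require an ingredient beyond the purely local hard-core techniques of this paper --- perhaps a correlation-decay input under triangle-freeness, a partition of $V(G)$ into neighborhoods combined with an entropy identity in the spirit of Kahn's proof of the bipartite case, or a structural dichotomy separating the $K_{d,d}$-like regime from the tree-like regime.
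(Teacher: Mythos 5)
The statement you have been asked to prove is labeled as Conjecture~\ref{conj:minD} in the paper, and the paper offers no proof of it: it is an open problem. So there is no ``paper's own proof'' to compare your attempt against, and a fair review must first and foremost note that you have not produced a proof either --- your write-up is explicitly a research plan, and you yourself conclude that ``closing the factor-of-$2$ gap will require an ingredient beyond the purely local hard-core techniques of this paper.'' That assessment is correct, and it means the proposal does not resolve the conjecture.

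That said, what you do write is consistent with the paper's own remarks. The identity
\[
\alpha(G) = \overline\alpha_G(1) + \int_1^\infty \frac{\var_\lam(|I|)}{\lam}\,d\lam
\]
is precisely equation \eqref{eq:varIntegrate} in Section~\ref{sec:conjectures}, and the authors explicitly single out lower-bounding the variance term as the natural route toward Conjectures~\ref{conj43}--\ref{conj:lam}. Your sanity checks are also sound: the ratio $\alpha/\overline\alpha(1)$ tends to $2$ both for $K_{d,d}$ (where $\overline\alpha_{K_{d,d}}(1)\to d/2$) and for random triangle-free $d$-regular graphs (where $\overline\alpha(1)\sim n\log d/d$ by Theorem~\ref{thm:avgShort} tightness and $\alpha\sim 2n\log d/d$). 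The two obstructions you identify --- that the occupancy-fraction machinery of Theorem~\ref{thm:Wtrianglefree} requires a \emph{maximum} degree bound while this conjecture assumes only a \emph{minimum} degree bound, and that a single variance lower bound must be simultaneously sharp in the rigid $K_{d,d}$ regime and the locally tree-like random regular regime --- are real and are exactly why the conjecture remains open. In short: your reading of the problem and of the paper's suggested direction is accurate, but the proposal stops where the actual difficulty begins, and no step of it advances the conjecture toward a proof.
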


\begin{conj}
\label{conj:lam}
For every $\eps >0$, there exists $\lam >0$ so that for all triangle-free graphs $G$,
\[ \frac{\alpha(G)}{\overline{\alpha}_G(\lam)} \ge 2 - \eps. \] 
\end{conj}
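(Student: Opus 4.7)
The plan is to fix $\eps>0$ and look for $\lam=\lam(\eps)>0$ such that $\alpha(G)\geq(2-\eps)\overline\alpha_G(\lam)$ uniformly over all triangle-free $G$. I would stratify by the maximum degree $d$ of $G$. In the bounded-degree regime $d\leq D(\eps)$, Shearer's theorem gives $\alpha(G)/n\geq(1+o_d(1))\log d/d$ while the trivial estimate $\overline\alpha_G(\lam)/n\leq\lam/(1+\lam)$ handles the denominator, so the desired ratio follows once $\lam$ is chosen small enough compared to $\log D/D$.

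The unbounded-degree regime is the crux. The extremal case should be the random $d$-regular triangle-free graph $G_{d,n}$ from Section \ref{sec:random}: classical results give $\alpha(G_{d,n})/n=(2+o_d(1))\log d/d$ whp, while the discussion following Proposition \ref{lem:Tdtight} yields $\overline\alpha_{G_{d,n}}(\lam)/n\sim\alpha_{T_d}(\lam)\sim W(\lam d)/d$. For any fixed $\lam=\Theta(1)$ these combine to $\alpha/\overline\alpha\to 2$ as $d\to\infty$, matching the conjecture exactly and suggesting the constant $2$ is correct and the random regular graph is indeed the worst case.

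The main obstacle is to extend this picture from $G_{d,n}$ to an arbitrary triangle-free $G$ of large maximum degree. Combined with the lower bound on $\overline\alpha_G(\lam)$ from Theorem \ref{thm:Wtrianglefree} applied at, say, $\lam=1/\log d$, any proof of Conjecture \ref{conj:lam} would immediately yield $\alpha(G)\geq(2+o_d(1))(\log d/d)n$: a factor-of-two strengthening of Shearer's theorem and precisely the bound that would halve the leading constant in $R(3,k)$. So the conjecture appears to be at least as hard as this famous open problem, and the same is true a fortiori for Conjectures \ref{conj43} and \ref{conj:minD}.

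A more direct alternative is to couple the hard-core sample with a greedy extension: draw $I$ from the hard-core model at fugacity $\lam$, process the remaining vertices in a uniformly random order, and add each vertex whose neighbourhood is disjoint from the current set, producing an independent set $I^*\supseteq I$ of size at most $\alpha(G)$. It would then suffice to show $\E|I^*|\geq(2-\eps)\E|I|$, with triangle-freeness providing the key structural input, since every neighbourhood is itself independent and the uncovered vertices therefore form a large reservoir of candidates. The hard part will be controlling the correlations introduced by the hard-core conditioning precisely enough to recover the exact constant $2$, rather than merely beating the trivial bound $\alpha(G)\geq\overline\alpha_G(\lam)$.
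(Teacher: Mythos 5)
This statement is a \emph{conjecture}, not a theorem: the paper offers no proof, only motivation, numerical evidence (the $R(3,9)$ graph), and a discussion of implications (Lemma~1 in Section~\ref{sec:conjectures}). You correctly recognize this and do not claim a complete proof, so there is no "paper proof" to compare against.

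Your discussion is sound as far as it goes. The bounded-degree stratification is fine (Shearer plus the trivial estimate $\frac{1}{n}\overline\alpha_G(\lam)\le\lam/(1+\lam)$), and your identification of $G_{d,n}$ as the expected extremal example, with $\alpha/\overline\alpha\to 2$, matches the paper's reasoning in Sections~\ref{sec:random} and~\ref{sec:conjectures}. Your observation that a proof would immediately yield $\alpha(G)\ge(2+o_d(1))\frac{\log d}{d}n$ for all triangle-free $G$, and hence $R(3,k)\le(1/2+o(1))k^2/\log k$, is exactly the content of Lemma~1(3) in the paper (using Theorem~\ref{thm:Wtrianglefree} together with the monotonicity of Proposition~\ref{prop:mono}), and it correctly conveys why the conjecture must be at least as hard as a well-known open problem. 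Two small caveats: your closing "a fortiori" about Conjectures~\ref{conj43} and~\ref{conj:minD} is not literally correct since the three conjectures are not nested (\ref{conj43} is pinned to $\lam=1$ and has a weaker ratio, \ref{conj:minD} replaces maximum by minimum degree), though each would still improve $R(3,k)$; and the greedy-coupling idea you sketch, while plausible, is an unexecuted heuristic. The paper's own suggested avenue is different: it points to the variance identity $\alpha(G)=\overline\alpha_G(1)+\int_1^\infty\frac{\var_\lam(|I|)}{\lam}\,d\lam$ and proposes lower-bounding the variance term by the occupancy-fraction method, which may be a more tractable route than analysing correlations under a greedy extension.
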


\begin{lemma}
The following improvements to Shearer's upper bound on $R(3,k)$ would follow from the above conjectures and Theorems~\ref{thm:avgShort} and \ref{thm:Wtrianglefree}.
\begin{enumerate}
\item Conjecture~\ref{conj43} implies $R(3,k) \le (3/4 +o(1)) k^2 /\log k$.
\item Conjecture~\ref{conj:minD} implies $R(3,k) \le (1/2 +o(1)) k^2 /\log k$.
\item Conjecture~\ref{conj:lam} implies $R(3,k) \le (1/2 +o(1)) k^2 /\log k$. 
\end{enumerate} 
\end{lemma}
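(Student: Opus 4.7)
The plan for all three parts follows a common skeleton. Suppose toward a contradiction that $G$ is a triangle-free graph on $n$ vertices with $\alpha(G)<k$; since each vertex's neighborhood is independent, $G$ has maximum degree at most $k-1$. Either Theorem~\ref{thm:avgShort} or Theorem~\ref{thm:Wtrianglefree} then supplies a lower bound on $\overline\alpha_G(\lam)$ of order $(\log k/k)n$, the conjectured ratio upgrades this to a lower bound on $\alpha(G)$, and the comparison $\alpha(G)<k$ yields the claimed estimate on $n$. Part (1) is immediate: Theorem~\ref{thm:avgShort} with $d=k-1$ gives $\overline\alpha_G(1)\geq(1+o_k(1))(\log k/k)n$, and Conjecture~\ref{conj43} then produces $k>\alpha(G)\geq(4/3)\overline\alpha_G(1)$, whence $n\leq(3/4+o(1))k^2/\log k$.

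For (3), fix $\eps'>0$ and choose $\eps,\delta>0$ small enough that $1/((2-\eps)(1-\delta))<1/2+\eps'$. Let $\lam_\eps>0$ be supplied by Conjecture~\ref{conj:lam}. The key observation is that by the monotonicity of $\overline\alpha_G$ in $\lam$ (Proposition~\ref{prop:mono}), the ratio bound $\alpha(G)/\overline\alpha_G(\lam)\geq 2-\eps$ is inherited by every $0<\lam\leq\lam_\eps$. Since the function $\lam\mapsto \lam/((1+\lam)\log(1+\lam))$ tends to $1$ as $\lam\to 0$, we may fix $\lam\in(0,\lam_\eps]$ so small that this quantity exceeds $1-\delta$. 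Using $W(k\log(1+\lam))=(1+o_k(1))\log k$ for this fixed $\lam>0$, Theorem~\ref{thm:Wtrianglefree} then gives $\overline\alpha_G(\lam)\geq(1-\delta)(1+o_k(1))(\log k/k)n$. Combining this with $\alpha(G)<k$ and the ratio estimate yields $n\leq(1/2+\eps')k^2/\log k$ for $k$ large.

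Part (2) requires an additional reduction to activate the minimum-degree hypothesis. Pick $d_0=d_0(k)\to\infty$ satisfying $d_0=o(k/\log k)$, e.g.\ $d_0=\log k$. Run the standard peel-and-take: while the current graph has a vertex $v$ of degree less than $d_0$, add $v$ to a growing independent set $I$ and delete $\{v\}\cup N(v)$. The resulting subgraph $H$ has minimum degree at least $d_0$, maximum degree less than $k$, and $|V(H)|\geq n-|I|(d_0+1)\geq n-k(d_0+1)$, since $I$ is independent in $G$ and hence $|I|\leq\alpha(G)<k$. Applying Conjecture~\ref{conj:minD} to $H$ and bounding $\overline\alpha_H(1)$ by Theorem~\ref{thm:avgShort} yields $\alpha(H)\geq(2-o_{d_0}(1))(1+o_k(1))(\log k/k)|V(H)|$. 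Since $\alpha(G)\geq\alpha(H)$, the hypothesis $\alpha(G)<k$ forces $|V(H)|\leq(1/2+o(1))k^2/\log k$, and the correction $k(d_0+1)=o(k^2/\log k)$ from the choice of $d_0$ is absorbed into the error term to give $n\leq(1/2+o(1))k^2/\log k$.

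The main non-routine step is the monotonicity argument in (3): rather than plugging $\lam=1$ into Theorem~\ref{thm:Wtrianglefree}, one exploits the fact that shrinking $\lam$ simultaneously tightens the Lambert-$W$ coefficient and preserves the conjectured ratio, so the optimal regime is $\lam\to 0$. The peeling reduction in (2) is standard but is where most of the bookkeeping lives, while (1) is essentially a one-line calculation.
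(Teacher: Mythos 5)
Your proof is correct and follows the same overall strategy as the paper's (terse) proof: part (1) is a one-line computation, part (3) combines Theorem~\ref{thm:Wtrianglefree} with monotonicity, and part (2) reduces to the minimum-degree setting via a peeling argument. Two implementation details differ, both in your favor. In part (2) you peel down to a threshold $d_0=\log k$ satisfying $d_0\to\infty$ and $kd_0=o(k^2/\log k)$, so that the removed vertices are negligible and you can use only the crude inequality $\alpha(G)\ge\alpha(H)$; the paper instead peels at the much larger threshold $d/(2\log d)$, where the removed vertices are \emph{not} negligible and one must combine $\alpha(G)\ge|I|+\alpha(H)$ with a cancellation between $|I|$ and $(2-o(1))\frac{\log d}{d}\cdot|I|\cdot\frac{d}{2\log d}$. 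Your choice avoids this bookkeeping entirely. In part (3), the paper observes (via the same $\lam=1/\log d$ substitution and Proposition~\ref{prop:mono} used for Theorem~\ref{thm:avgShort}) that $\overline\alpha_G(\lam)\ge(1+o_d(1))\frac{\log d}{d}n$ for \emph{any} fixed $\lam>0$, and applies this directly at $\lam=\lam_\eps$; you instead shrink $\lam$ below $\lam_\eps$, accept an explicit $(1-\delta)$ loss from the prefactor $\frac{\lam}{(1+\lam)\log(1+\lam)}$, and pay for it by noting the ratio bound is inherited at smaller $\lam$ by monotonicity of $\overline\alpha_G$. Both routes are valid and give the same conclusion.
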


\begin{proof}
(1) and (3) are immediate as Theorem~\ref{thm:Wtrianglefree} implies that for any fixed $\lam >0$, $ \overline \alpha_G(\lam) \ge (1+o_d(1))\frac{\log d}{d}  n$ in any maximum degree $d$, triangle-free graph $G$ on $n$ vertices. 

To show (2) we take any triangle-free graph of maximum degree $d$, greedily removing any vertices of degree at most $\frac{d}{2 \log d}$ and their neighbors. 
\end{proof}

One possible approach to the above conjectures is via the following simple consequence of the proof of Proposition \ref{prop:mono}. For any graph $G$
\begin{equation}
\label{eq:varIntegrate}
\alpha(G) = \lim_{\lam \to \infty} \overline\alpha_G(\lam) = \overline \alpha_G(1) + \int_{1}^{\infty} \frac{ \var_\lam (|I|)   }{\lam} \, d \lam  .
\end{equation}
In this paper we gave a lower bound for the expected size of an independent set drawn from a triangle-free graph according to the hard-core model. The above equation shows that one approach to the above conjectures would be do to the same for the variance.

It is not clear that there is anything special about excluding triangles as opposed to other graphs, and so we make the following more general conjecture.  
\begin{conj}
\label{conj:Kr}
For every $K_r$-free graph $G$, 
\[ \frac{\alpha(G)}{\overline{\alpha}_G(1)} \ge 1+ \frac{1}{r}. \]
For every $K_r$-free graph $G$ of minimum degree $d$,
\[ \frac{\alpha(G)}{\overline{\alpha}_G(1)} \ge 2 - o_d(1), \]
with $r$ fixed as $d \to \infty$.
\end{conj}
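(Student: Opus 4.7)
The stated result is a pair of conjectures, so my plan is necessarily speculative; I would attack the two inequalities separately since they have quite different character. The first, $\alpha(G)/\overline{\alpha}_G(1) \ge 1+1/r$, is a uniform bound holding even on small graphs, and I would approach it by extending the occupancy-method framework of Theorem~\ref{thm:Wtrianglefree} inductively on $r$. The second, $\alpha(G)/\overline{\alpha}_G(1) \ge 2 - o_d(1)$ under a minimum-degree assumption, is an asymptotic statement about concentration of the hard-core measure at $\lam = 1$, and I would approach it through the variance identity \eqref{eq:varIntegrate}.

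For the uniform bound, the key observation is that in a $K_r$-free graph the subgraph induced on the neighborhood $N(v)$ of any vertex is $K_{r-1}$-free, so the argument should recurse on $r$. In the proof of Theorem~\ref{thm:Wtrianglefree}, triangle-freeness was used only to obtain Fact~2, that the uncovered neighbors of $v$ are independently unoccupied. For $K_r$-free $G$ this gets replaced by a weaker constraint encoding the hard-core distribution on the $K_{r-1}$-free graph induced on the uncovered neighbors. I would set up analogues of \eqref{eq:alp1bd1} and \eqref{eq:alp2bd1}, now coupled to the occupancy fraction on this induced $K_{r-1}$-free subgraph, and solve the resulting optimization inductively. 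Since $K_r$ itself achieves ratio exactly $1 + 1/r$ (one checks $\alpha = 1$ and $\overline{\alpha}(1) = r/(r+1)$), the target is a natural stopping point for such a recursion.

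For the asymptotic min-degree bound, I would use the variance identity \eqref{eq:varIntegrate}: the goal reduces to showing $\int_1^\infty \var_\lam(|I|)/\lam \, d\lam \ge (1 - o_d(1))\overline{\alpha}_G(1)$. The predicted ratio of $2$ is sharp on $K_{d,d}$, where one computes directly that $\alpha(K_{d,d}) = d$ and $\overline{\alpha}_{K_{d,d}}(1) \to d/2$ as $d\to\infty$, so the conjecture asserts that bipartite-like behaviour dominates the asymptotics for $K_r$-free graphs of high minimum degree. The concrete task is to derive a lower bound on $\var_\lam(|I|)$ via a second-moment version of the occupancy method, tracking joint probabilities $\Pr[u,v \in I]$ for nearby pairs $u,v$ under the $K_r$-free local structure, and then integrate.

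I expect the main obstacle to lie in the inductive step for the uniform bound: the occupancy method already reduces to a constrained optimization over the distribution of an auxiliary random variable $Z$, and threading an induction on $r$ through this optimization requires simultaneously controlling a nested family of such distributions, one per level of neighborhood nesting. Moreover, the base case $r = 3$ is already the open Conjecture~\ref{conj43}, and the first-moment bound of Theorem~\ref{thm:Wtrianglefree} is asymptotically tight, so any successful proof must genuinely use higher-order information: a second-moment refinement or an argument that exploits structure away from the tree-like case.
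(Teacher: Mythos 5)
The statement you were asked to prove is Conjecture~\ref{conj:Kr}: the paper states it as an open problem and offers no proof, so there is nothing to compare your argument against. Your submission is, as you acknowledge, a research plan rather than a proof, and it does not close the conjecture. The decisive gaps are ones you flag but do not resolve. For the uniform bound, your induction on $r$ has no base case: $r=3$ is exactly the open Conjecture~\ref{conj43}. More structurally, the occupancy method of Theorem~\ref{thm:Wtrianglefree} produces only a lower bound on $\overline{\alpha}_G(\lam)$, whereas the conjecture requires separating $\alpha(G)$ from $\overline{\alpha}_G(1)$ --- that is, either an upper bound on $\overline{\alpha}_G(1)$ or a lower bound on $\alpha(G)$ strictly exceeding it. Recursing on the $K_{r-1}$-free neighborhoods (Shearer's and Alon's device) again only improves the lower bound on $\overline{\alpha}$, so it is unclear that your scheme addresses the ratio at all. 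The one tool the paper supplies in this direction, Theorem~\ref{thm:cliqueBound} (via Moon--Moser), gives $\alpha/\overline{\alpha}(1)\ge 1+\alpha/n$, which yields $1+1/r$ only when $\alpha\ge n/r$; a $K_r$-free graph need not satisfy this, so even that route leaves a real gap, and you do not engage with it. A small correction: $K_r$ is not $K_r$-free, so it is not a tightness example for the first inequality (a disjoint union of $(r-1)$-cliques is $K_r$-free and has ratio $1+1/(r-1)$); the constant $1+1/r$ is heuristic, as the paper's own discussion of the choice of $4/3$ makes clear.

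For the asymptotic minimum-degree bound, your reduction via \eqref{eq:varIntegrate} to showing $\int_1^\infty \var_\lam(|I|)\,\lam^{-1}\,d\lam \ge (1-o_d(1))\,\overline{\alpha}_G(1)$ is precisely the approach the paper itself proposes and leaves open; you supply no second-moment estimate, and lower-bounding $\var_\lam(|I|)$ for general $K_r$-free graphs is the entire difficulty. Your sanity checks ($K_{d,d}$ has ratio tending to $2$ and is $K_r$-free for $r\ge 3$) are correct and confirm the constant $2$ cannot be raised, but they do not advance a proof. In short, the proposal correctly maps the terrain and the known obstacles, but every step that would constitute progress beyond what the paper already says is missing.
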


To conclude, we give a lower bound on $\alpha/\overline\alpha(\lam)$ that is tight for disjoint unions of $r$-cliques. 
The result also shows that $\alpha/\overline\alpha(1)$ is bounded away from $1$ for all graphs containing an independent set of linear size. 

\begin{theorem}
\label{thm:cliqueBound}
For any graph $G$,
\begin{align}
 \frac{\alpha(G)}{\overline \alpha_G(\lam)} &\ge 1+ \frac{\alpha(G)}{\lam n}  ,
 \intertext{or equivalently,}
 \label{part:bound1}
 P(\lam) & \ge \left(\frac{\lam }{\alpha} + \frac{1}{n}  \right) P'(\lam) .
 \end{align}
This is tight when $G$ is a union of $r$-cliques, for any $r$. 
\end{theorem}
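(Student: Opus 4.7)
I plan to prove the equivalent partition-function form $P(\lambda) \ge (\lambda/\alpha + 1/n)P'(\lambda)$, from which the first inequality follows by $\alpha / \overline\alpha_G(\lambda) = \alpha P(\lambda)/(\lambda P'(\lambda))$ via the identity in \eqref{eq:avgSize}. Clearing denominators reduces the target to $n\alpha P(\lambda) \ge (\alpha + n\lambda)P'(\lambda)$. Using the two representations $P'(\lambda) = \sum_I |I|\lambda^{|I|-1} = \sum_{v \in V} P_{G-N[v]}(\lambda)$, together with the double-count identity $\sum_I |N[I]|\lambda^{|I|} = nP(\lambda) - P'(\lambda)$ (obtained by summing $\Pr[v \in N[I]] = 1 - P_{G-N[v]}(\lambda)/P(\lambda)$ over $v$), the inequality simplifies to
\[
\alpha \sum_{I \in \mathcal I(G)} |N[I]|\lambda^{|I|} \;\ge\; n \sum_{I \in \mathcal I(G)} |I|\lambda^{|I|},
\]
equivalently $\alpha\,\mathbb E_\lambda[|N[I]|] \ge n\,\mathbb E_\lambda[|I|]$ under the hard-core measure.

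The pointwise version $\alpha|N[I]| \ge n|I|$ fails in general (take $G$ to be a triangle plus an isolated vertex $v$ and $I = \{v\}$: one gets $2 \not\ge 4$), so the $\lambda^{|I|}$-weighting is essential. Two contributions however give themselves: $I = \emptyset$ yields $0 \ge 0$, and for any maximum independent set $I^*$, maximality forces $N[I^*] = V$ (else an uncovered vertex would extend $I^*$), so equality $\alpha \cdot n = n \cdot \alpha$ holds at that term.

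The main step is showing that $h(\lambda) := n\alpha P(\lambda) - (\alpha + n\lambda)P'(\lambda) \ge 0$ for all $\lambda \ge 0$. Writing $h(\lambda) = \sum_k c_k \lambda^k$ with $a_k = |\mathcal I_k(G)|$ the number of independent sets of size $k$, the bijective identity $(k+1)a_{k+1} = \sum_{J \in \mathcal I_k(G)}(n - |N[J]|)$ (via $(I, v) \leftrightarrow (I \setminus \{v\}, v)$) converts coefficient positivity $c_k \ge 0$ into the neighborhood-averaging statement
\[
\frac{1}{a_k}\sum_{J \in \mathcal I_k(G)} |N[J]| \;\ge\; \frac{nk}{\alpha} \qquad \text{for every } 0 \le k \le \alpha.
\]
For $k = 1$ this is $\sum_v |N[v]| \ge n^2/\alpha$, which follows from the Caro--Wei bound $\alpha \ge \sum_v 1/|N[v]|$ combined with Cauchy--Schwarz $\sum_v |N[v]| \cdot \sum_v 1/|N[v]| \ge n^2$; for $k = \alpha$ it is the equality $|N[I^*]| = n$. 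The main obstacle is the intermediate range; my plan is induction on $k$, writing a size-$(k+1)$ independent set as $J \cup \{v\}$ for $v \notin N[J]$ and propagating the averaged bound by a Caro--Wei argument on $G - N[J]$, whose independence number satisfies $\alpha(G - N[J]) \le \alpha - k$.

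For tightness, take $G$ to be a disjoint union of $k_0$ copies of $K_r$: then $P(\lambda) = (1 + r\lambda)^{k_0}$, $n = rk_0$, $\alpha = k_0$, and $a_k = \binom{k_0}{k} r^k$, so using $\binom{k_0}{k+1} = \binom{k_0}{k}(k_0 - k)/(k+1)$ one finds $c_k = 0$ for every $k$ and both displayed inequalities hold with equality for all $\lambda > 0$.
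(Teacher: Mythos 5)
Your reduction to non-negativity of the coefficients of $h(\lambda) = n\alpha P(\lambda) - (\alpha + n\lambda)P'(\lambda)$ is exactly the paper's reduction (the paper works with $Q(\lambda) = h(\lambda)/(n\alpha)$). Your translation of coefficient non-negativity, via the identity $(k+1)a_{k+1} = na_k - \sum_{J\in\mathcal I_k(G)}|N[J]|$, into the averaged neighborhood-coverage inequality
\[
\frac{1}{a_k}\sum_{J\in\mathcal I_k(G)}|N[J]| \ \ge\ \frac{nk}{\alpha}, \qquad 0 \le k \le \alpha,
\]
is correct and, after reindexing, is precisely the paper's target $k\, i_k/i_{k-1} \le n - n(k-1)/\alpha$ (with $a_k = i_k$). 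Your Caro--Wei plus Cauchy--Schwarz handling of $k=1$, the forced equality at $k=\alpha$, and the tightness verification for disjoint unions of $r$-cliques are all fine.

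The gap is the middle range $1 < k < \alpha$, which you acknowledge and leave as a plan. If one tries to execute it as suggested --- write a size-$(k+1)$ set as $J\cup\{v\}$, use $|N[J\cup\{v\}]| = |N[J]| + |N_{G-N[J]}[v]|$, and apply Caro--Wei and Cauchy--Schwarz on $G-N[J]$ together with $\alpha(G-N[J]) \le \alpha-k$ --- then after setting $m_J = n-|N[J]|$ the induction reduces to needing
\[
\frac{\sum_{J\in\mathcal I_k} m_J^2}{\sum_{J\in\mathcal I_k} m_J}\ \le\ \frac{n(\alpha-k)}{\alpha}.
\]
But the inductive hypothesis only controls the unweighted average $\frac{1}{a_k}\sum_J m_J \le n(\alpha-k)/\alpha$; the quantity above is the $m_J$-weighted average, which exceeds the unweighted one (by Cauchy--Schwarz) whenever the $m_J$ are not all equal, so the step does not close. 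You would need some second-moment control on the $m_J$ that your set-up does not provide. The paper sidesteps this entirely: it invokes the Moon--Moser inequality $\frac{1}{k^2-1}\bigl(k^2\, i_k/i_{k-1} - n\bigr) \le i_{k+1}/i_k$ --- a relation between \emph{consecutive ratios} of level counts, not neighborhood averages --- and inducts \emph{downward} from $k=\alpha$, where $i_{\alpha+1}=0$ makes Moon--Moser immediately yield $i_\alpha/i_{\alpha-1}\le n/\alpha^2$, the base case. That single extra ingredient is what your proposal is missing, and without it (or something of comparable strength) the middle range does not follow from your two base cases.
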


\begin{proof}
Let $P(\lam)$ be the partition function of $G$.  Define
\[ Q(\lam) = P(\lam) - \frac{\lam}{\alpha} P'(\lam) - \frac{1}{n} P'(\lam) .\]
Then
\begin{align*}
\frac{Q(\lam)}{P(\lam)} &= 1 -  \frac{\lam}{\alpha} \frac{P'(\lam)}{P(\lam)} - \frac{1}{n} \frac{P'(\lam)}{P(\lam)} = 1- \frac{\overline\alpha(\lam)}{\alpha} - \frac{\overline\alpha(\lam)}{\lam n}, 
\end{align*}
and so after rearranging, it suffices to show that $Q(\lam) \ge 0$ for all $\lam$.  In fact, we will show that all coefficients of $Q(\lam)$ as a polynomial in $\lam$ are non-negative.  Let $Q[k]$, denote the coefficient of $\lam^k$ in $Q(\lam)$, and let $i_k = P[k]$ be the number of independent sets of size $k$ in $G$. Then
\begin{align*}
Q[k-1] &= i_{k-1}- \frac{k-1}{\alpha} i_{k-1} - \frac{k}{n} i_{k}.
\end{align*}
So it is enough to show for $k =1, \dotsc, \alpha$ that  
\begin{equation}
\label{eq:goal}
k \frac{i_{k}}{i_{k-1}} \le n - \frac{n(k-1)}{\alpha}.
\end{equation}

The following inequality is due to Moon and Moser \cite{moon1962problem} (though it is usually stated for cliques instead of independent sets):
\begin{equation}
\label{eq:moonmoser}
\frac{1}{k^2-1} \left( k^2 \frac{i_{k}(G)}{i_{k-1}(G)} - n \right ) \le \frac{i_{k+1}(G)}{i_{k}(G)} .
\end{equation}
We proceed by induction on $k$, from $k=\alpha$ down to $k=1$.  If $k=\alpha$, then \eqref{eq:moonmoser} gives:
\begin{align*}
\frac{i_{k}(G)}{i_{k-1}(G)} &\le \frac{n}{k^2} 
\end{align*}
which is exactly \eqref{eq:goal} with $\alpha = k$.  

Now inductively assume $\frac{i_{k+1}}{i_k} \le n (1- \frac{k}{\alpha}) \frac{1}{k+1}$, and plug this into the RHS of \eqref{eq:moonmoser}. Rearranging, this gives   
\begin{equation}
\label{eq:goal2}
 \frac{i_{k}}{i_{k-1}} \le n \left ( \frac{1}{k} - \frac{1}{\alpha} + \frac{1}{k \alpha} \right ) ,
\end{equation}
which is \eqref{eq:goal}. 
\end{proof}
The integrated form of \eqref{part:bound1} is a simple bound on the partition function in terms of the independence number and number of vertices: for any graph $G$ on $n$ vertices with independence number $\alpha$, 
\begin{equation*}
P_G(\lam) \le \left( 1 + \frac{\lam n}{\alpha}  \right) ^{\alpha}.
\end{equation*}
This bound is tight for unions of cliques of the same size and  is implicit in the work of Moon and Moser. It appears explicitly in~\cite{galvin2009upper}, generalizing the $\lam =1 $ case from~\cite{alekseev1991number}.

\section*{Acknowledgements}

We thank Rob Morris for his helpful comments on this paper, Noga Alon and Benny Sudakov for recalling James Shearer's 1998 SIAM talk, and James Shearer for sharing his transparencies from that talk with us.

\end{document}